\newcommand{\bcen}{\begin{center}}     \newcommand{\ecen}{\end{center}}
\newcommand{\bay}{\begin{array}}      \newcommand{\eay}{\end{array}}
\newcommand{\beq}{\begin{eqnarray*}}      \newcommand{\eeq}{\end{eqnarray*}}
\def\dz{\delta}
\def\ot{\otimes}
\def\A{\mathcal{A}}
\def\add{\mathrm{add}}
\def\B{\mathcal{B}}
\def\D{\mathcal{D}}
\def\dim{\mathrm{dim}}
\def\dz{\delta}
\def\End{\mathrm{End}}
\def\Ext{\mathrm{Ext}}
\def\gl{\mathrm{gl.dim}}
\def\Hom{\mathrm{Hom}}
\def\mod{\mathrm{mod}}
\def\Mod{\mathrm{Mod}}
\def\ol{\overline}
\def\op{\mathrm{op}}
\def\ot{\otimes}
\def\pd{\mathrm{pd}}
\def\per{\mathrm{per}}
\def\rad{\mathrm{rad}}
\def\RHom{\mathrm{RHom}}
\def\tr{\mathrm{tr}}
\def\ul{\underline}
\def\Z{\mathbb{Z}}
\begin{document}

\newtheorem{theorem}{Theorem}
\newtheorem{proposition}{Proposition}
\newtheorem{lemma}{Lemma}
\newtheorem{corollary}{Corollary}
\newtheorem{remark}{Remark}
\newtheorem{example}{Example}
\newtheorem{definition}{Definition}
\newtheorem*{conjecture}{Conjecture}
\newtheorem{question}{Question}

\title{\bf\large Entropies of Serre functors for higher hereditary algebras \footnote{This work was supported by the National Natural Science Foundation of China (Grant No. 12371043 and 11971460).}}

\author{Yang Han}

\date{\footnotesize KLMM, Academy of Mathematics and Systems Science,
Chinese Academy of Sciences, \\ Beijing 100190, China. \\ School of Mathematical Sciences, University of
Chinese Academy of Sciences, \\ Beijing 100049, China.\\ E-mail: hany@iss.ac.cn}

\maketitle

{\small\it Dedicated to Professor Claus Michael Ringel on the occasion of his 80th birthday}

\bigskip

\begin{abstract}
For a higher hereditary algebra, we calculate its upper (lower) Serre dimension, the entropy and polynomial entropy of Serre functor, and the Hochschild (co)homology entropy of Serre quasi-functor. These invariants are determined by its Calabi-Yau dimension for a higher representation-finite algebra, and by its global dimension and the spectral radius and polynomial growth rate of its Coxeter matrix for a higher representation-infinite algebra. For this, we will prove the Yomdin type inequality on Hochschild homology entropy for a finite dimensional elementary algebra of finite global dimension. Our calculations imply that the Kikuta and Ouchi's question on relations between entropy and Hochschild (co)homology entropy has positive answer, and the Gromov-Yomdin type equalities on entropy and Hochschild (co)homology entropy hold, for the Serre functor on perfect derived category and the Serre quasi-functor on perfect dg module category of an indecomposable elementary higher hereditary algebra.
\end{abstract}

\medskip

{\footnotesize {\bf Mathematics Subject Classification (2020)}:} 16G10, 16E35, 16E40, 18G80.

\medskip

{\footnotesize {\bf Keywords} :  Higher hereditary algebra, upper (lower) Serre dimension, entropy, polynomial entropy, Hochschild (co)homology entropy.}

\tableofcontents

\section{Introduction}

A topological dynamical system $(X,f)$ consists of a topological space $X$ and a continuous function $f:X\to X$. The topological entropy $h_\mathrm{top}(f)$ measures the complexity of $(X,f)$.
As a categorical analog of topological dynamical system, in \cite{DimHaiKatKon14}, Dimitrov, Haiden, Katzarkov and Kontsevich introduced categorical dynamical system $({\cal T},F)$ which consists of a triangulated category ${\cal T}$ and a (triangle) endofunctor $F: {\cal T}\to {\cal T}$ of ${\cal T}$, and (categorical) entropy which measures the complexity of a categorical dynamical system.
Roughly speaking, the entropy is the asymptotically exponential growth rate of the complexity of a categorical dynamical system. In \cite{FanFuOuc21}, Fan, Fu and Ouchi introduced (categorical)
polynomial entropy which is the asymptotically polynomial growth rate of the complexity of a categorical dynamical system. Moreover, in \cite{KikOuc20}, Kikuta and Ouchi introduced Hochschild (co)homology entropy. Hochschild (co)homology entropy is defined not for a categorical dynamical system, but for a ``dg categorical dynamical system''. Meanwhile, Kikuta and Ouchi posed a question (\cite[Question 2.13]{KikOuc20}): When does the Hochschild (co)homology entropy for a ``dg categorical dynamical system'' coincide with the entropy for the corresponding categorical dynamical system? In addition to these types of entropies, in \cite{ElaLun21}, Elagin and Lunts introduced the upper (lower) Serre dimension of a triangulated category with a (split=classical) generator and a Serre functor, which sometimes is the coefficient of the degree one term of the entropy (\cite[Proposition 6.14]{ElaLun21}).

As a generalization of representation-finite hereditary algebras, Iyama and Oppermann introduced higher representation-finite algebras in \cite{IyaOpp11}. As a generalization of representation-infinite hereditary algebras, Herschend, Iyama and Oppermann introduced higher representation-infinite algebras in \cite{HerIyaOpp14}.
Meanwhile, they also introduced higher hereditary algebras which are shown to be either higher representation-finite algebras or higher representation-infinite algebras (\cite[Theorem 3.4]{HerIyaOpp14}). Many classical results in the representation theory of hereditary algebras have higher dimensional analogs for higher hereditary algebras.
Moreover, as a generalization of fractionally Calabi-Yau algebras, Herschend and Iyama introduced twisted fractionally Calabi-Yau algebras in \cite{HerIya11}, which contain higher representation-finite algebras as typical examples (\cite[Theorem 1.1]{HerIya11}).

In this paper, for a higher hereditary algebra, we will calculate its upper (lower) Serre dimensions, the entropy and polynomial entropy of Serre functor, and the Hochschild (co)homology entropy of Serre quasi-functor.
Given Serre functor and higher hereditary algebra have congenital relationship, the calculations become feasible.

Our main results are the following.

\medskip

{\bf Theorem A.} (= Theorem~\ref{Theorem-TFCY-Entropies}) {\it Let $A$ be a twisted $\frac{q}{p}$-Calabi-Yau algebra. Then

\medskip

{\rm (1)} the entropy of Serre functor $h_t(S)=\frac{q}{p}t$.

\medskip

{\rm (2)} the polynomial entropy of Serre functor $h^\mathrm{pol}_t(S)=0$.

\medskip

{\rm (3)} the Hochschild (co)homology entropy of Serre quasi-functor $h^{HH^\bullet}(\tilde{S})=h^{HH_\bullet}(\tilde{S})=0$ if we assume further that $A$ is elementary.

\medskip

{\rm (4)} the upper (lower) Serre dimension $\ol{\mathrm{Sdim}}A=\ul{\mathrm{Sdim}}A=\frac{q}{p}$.}

\medskip

Theorem A (1), (2) and (4) generalize the corresponding results \cite[2.6.1]{DimHaiKatKon14}, \cite[Remark 6.3]{FanFuOuc21} and \cite[Proposition 3.17]{Ela22} for fractionally Calabi-Yau algebras. To date, it is not known whether every higher representation-finite algebra, or more general, twisted fractionally Calabi-Yau algebra, is fractionally Calabi-Yau or not (\cite[Question 1.6]{ChaDarIyaMar21}). So Theorem A should have its own place.

From Theorem A and \cite[Theorem 1.1]{HerIya11}, we get immediately the following corollary.

\medskip

{\bf Corollary B.} (= Corollary~\ref{Corollary-RepFin-Entropies}) {\it Let $A$ be an indecomposable $d$-representation-finite algebra,
$r$ the number of isomorphism classes of simple $A$-modules, and $p$ the number
of indecomposable direct summands of the basic $d$-cluster tilting $A$-module. Then

\medskip

{\rm (1)} the entropy of Serre functor $h_t(S)=\frac{d(p-r)}{p}t$.

\medskip

{\rm (2)} the polynomial entropy of Serre functor $h^\mathrm{pol}_t(S)=0$.

\medskip

{\rm (3)} the Hochschild (co)homology entropy of Serre quasi-functor $h^{HH^\bullet}(\tilde{S})=h^{HH_\bullet}(\tilde{S})=0$ if we assume further that $A$ is elementary.

\medskip

{\rm (4)} the upper (lower) Serre dimension $\ol{\mathrm{Sdim}}A=\ul{\mathrm{Sdim}}A=\frac{d(p-r)}{p}$.}

\medskip

Applying the Hirzebruch-Riemann-Roch type theorem (\cite[Theorem 1]{Han20}) and Wimmer's formula (\cite[Theorem]{Wim74}), we can obtain the following Theorem C which gives the Yomdin type inequality on Hochschild homology entropy.

\medskip

{\bf Theorem C.} (= Theorem~\ref{Theorem-HHEntropy-Yomdin})
{\it Let $A$ be a finite dimensional elementary algebra of finite global dimension, $M$ a perfect $A$-bimodule complex, and $\Psi_M:=-C_MC_A^{-1}$ the dual Coxeter matrix of $M$. Then
$h^{HH_\bullet}(M) \ge \log\rho(\Psi_M)$. Here, $\rho(\Psi_M)$ is the spectral radius of the square matrix $\Psi_M$ (See Subsection 2.2).}

\medskip

I do not know whether the Gromov type inequality on Hochschild homology entropy, that is, $h^{HH_\bullet}(M) \le \log\rho(\Psi_M)$, and the Gromov and Yomdin type inequalities on Hochschild cohomology entropy, that is, $h^{HH^\bullet}(M) \le \log\rho(\Psi_M)$ and $h^{HH^\bullet}(M) \ge \log\rho(\Psi_M)$, hold or not.

The Theorem C above will be applied to show the following Theorem D.

\medskip

{\bf Theorem D.} (= Theorem~\ref{Theorem-RepInfAlg-Entropies}) {\it Let $A$ be an elementary $d$-representation-infinite algebra, and $\Phi$ the Coxeter matrix of $A$. Then

\medskip

{\rm (1)} the entropy of (inverse) Serre functor: $h_t(S)= dt+\log\rho(\Phi)$ and
$h_t(S^{-1}) \linebreak = -dt+\log\rho(\Phi^{-1}).$
Furthermore, $\rho(\Phi)=\rho(\Phi^{-1}).$

\medskip

{\rm (2)} the polynomial entropy of (inverse) Serre functor: $h^\mathrm{pol}_t(S)=s(\Phi)$ and $h^\mathrm{pol}_t(S^{-1})=s(\Phi^{-1})$. Furthermore, $s(\Phi)=s(\Phi^{-1})$.
Here, $s(\Phi)$ is the polynomial growth rate of the square matrix $\Phi$ (See Subsection 2.2).

\medskip

{\rm (3)} the Hochschild (co)homology entropy of (inverse) Serre quasi-functor: \linebreak $h^{HH^\bullet}(\tilde{S}) = h^{HH_\bullet}(\tilde{S}) = h(S) = \log\rho(\Phi) = \log\rho(\Phi^{-1}) = h(S^{-1}) = h^{HH_\bullet}(\tilde{S}^{-1}) \linebreak = h^{HH^\bullet}(\tilde{S}^{-1})$.
Here, $h(S)$ is the value $h_0(S)$ of the entropy $h_t(S)$ of the Serre functor $S$ at $t=0$ (See Subsection 2.1).

\medskip

{\rm (4)} the upper (lower) Serre dimension: $\ol{\mathrm{Sdim}}A = \ul{\mathrm{Sdim}}A = \gl A = d$.}

\medskip

Partial results of Theorem D (1), (2) and (4) for representation-infinite hereditary algebras had been obtained in \cite[Theorem 2.17]{DimHaiKatKon14}, \cite[Proposition 4.2]{Ela22} and \cite[Proposition 4.4]{FanFuOuc21}. The equalities $\rho(\Phi)=\rho(\Phi^{-1})$ and $s(\Phi)=s(\Phi^{-1})$ seem to be new. Serre functor is a categorification of Coxeter matrix. Theorem D (1), (2) and (3) suggest that, for an elementary higher representation-infinite algebra, the entropy of Serre functor and the Hochschild (co)homology entropy of Serre quasi-functor are the categorifications of spectral radius of Coxeter matrix, and polynomial entropy is the categorification of polynomial growth rate of Coxeter matrix, in some sense.

Furthermore, our main results imply that the Kikuta and Ouchi's question on the relations between entropy and Hochschild (co)homology entropy has positive answer, that is, $h^{HH^\bullet}(\tilde{S}) = h^{HH_\bullet}(\tilde{S}) = h(S)$, and the Gromov-Yomdin type equalities on entropy and Hochschild (co)homology entropy hold, that is, $h(S)=\log\rho([S])$ and $h^{HH^\bullet}(\tilde{S})= h^{HH_\bullet}(\tilde{S}) =\log\rho([H^0(\tilde{S})])$, for the Serre functor $S$ on the perfect derived category and the Serre quasi-functor $\tilde{S}$ on the perfect dg module category of an elementary twisted fractionally Calabi-Yau algebra or an indecomposable elementary higher hereditary algebra.

\medskip

The paper is structured as follows: In Section 2, we will recall the definitions of entropy, polynomial entropy, Hochschild (co)homology entropy and upper (lower) Serre dimension, and their basic properties that we need for later use. Moreover, we will reformulate Hochschild (co)homology entropy so that they are easier to manipulate in our situations. In Section 3, we will calculate these invariants for the Serre functor on the perfect derived category and the Serre quasi-functor on the perfect dg module category of a higher hereditary algebra. We will consider twisted fractionally Calabi-Yau algebras, higher representation-finite algebras, and higher representation-infinite algebras in turn. For this, we will show the Yomdin type inequality on Hochschild homology entropy for a finite dimensional elementary algebra, and prove that Hochschild cohomology entropy and Hochschild homology entropy coincide for the (inverse) Serre quasi-functor on perfect dg module category of a proper smooth dg algebra (Proposition~\ref{Proposition-HHCEntropy=HHEntropy}).

\medskip

\noindent{\bf Conventions.} Throughout this paper, $k$ is a field and $(-)^*:=\Hom_k(-,k)$ is $k$-dual. Unless stated otherwise, all algebras (resp. vector spaces, categories and functors) are $k$-algebras (resp. $k$-vector spaces, $k$-categories and $k$-functors). Moreover, all functors between triangulated categories are assumed to be triangle (= exact) functors. For a vector space complex $X$ with finite dimensional total cohomology, $\mathrm{tdim}_k(X):=\sum\limits_{i\in\mathbb{Z}} \dim_k H^i(X)\in\mathbb{Z}_{\ge 0}$ is the {\it total dimension} of the total cohomology $\bigoplus\limits_{i\in\mathbb{Z}}H^i(X)$ of $X$, and $\mathrm{sdim}_k(X):=\sum\limits_{i\in\mathbb{Z}} (-1)^i\ \!\dim_k H^i(X)\in\mathbb{Z}$ is the {\it super dimension} of $X$. For a bounded complex $X$ of finite dimensional vector spaces, $\mathrm{dim}_kX:=\sum\limits_{i\in\mathbb{Z}} \dim_k X^i\in\mathbb{Z}_{\ge 0}$ is the {\it dimension} of the underlying graded vector space $X=\bigoplus\limits_{i\in\mathbb{Z}}X^i$. For a finite dimensional algebra $A$, we denote by $\Mod A$ the category of {\it right} $A$-modules, by $\mod A$ the full subcategory of $\Mod A$ consisting of all finite dimensional right $A$-modules, by $\D(A)$ the unbounded derived category of $A$, and by $\mathcal{D}^b(A)$ the bounded derived category of $A$, that is, the full triangulated subcategory of $\D(A)$ consisting of all right $A$-module complexes with finite dimensional total cohomology.
For representation theory of algebras, we refer to \cite{AssSimSko06} and \cite{AusReiSma95}.
For knowledge of dg categories, we refer to \cite{Kel94,Kel06,Toe07,ToeVaq07,Toe11}.

\section{Entropies and Serre dimensions}

In this section, we will recall the definitions of entropy, polynomial entropy, Hochschild (co)homology entropy and upper (lower) Serre dimension, and their basic properties that we need for later use. Moreover, we will reformulate Hochschild (co)homology entropy so that they are easier to manipulate in our situations.

\subsection{Entropy}

Entropy is the asymptotically exponential growth rate of complexity.

\medskip

\noindent{\bf Complexity.}
Let ${\cal T}$ be a triangulated category with shift functor $[1]$, and $E_1,E_2$ two objects in ${\cal T}$.
The {\it complexity} of $E_2$ with respect to $E_1$ (\cite[Definition 2.1]{DimHaiKatKon14}) is the function $\dz_t(E_1,E_2): \mathbb{R}\to \mathbb{R}_{\ge 0}\cup\{\infty\}$ in the real variable $t$ given by
$$\dz_t(E_1,E_2):=\mathrm{inf}\left\{ \sum\limits_{i=1}^m e^{n_it}\ \left|\
\xymatrix@=3pt{
T_0 \ar[rr] & & T_1 \ar[dl] & \cdots & T_{m-1} \ar[rr] && T_m \ar[dl] \\
& E_1[n_1] \ar@{.>}[ul] &   &        && E_1[n_m] \ar@{.>}[ul] & }
\right.\right\}$$
where $T_0=0, T_m=E_2\oplus E'_2$ for some $E'_2\in {\cal T},$ and $T_{i-1}\to T_i\to E_1[n_i]\to,\linebreak 1\le i\le m$, are triangles in ${\cal T}$.
By convention, $\dz_t(E_1,E_2):=0$ if $E_2=0$, and $\dz_t(E_1,E_2):=\infty$ if and only if $E_2$ is not in the thick triangulated subcategory of ${\cal T}$ generated by $E_1$.
Note that $\dz_0(E_1,E_2)$ is the least number of steps required to build $E_2$ out of $\{E_1[n]\ |\ n\in\mathbb{Z}\}$.

Usually, one considers a {\it triangulated category ${\cal T}$ with a (split=classical) generator $G$}, that is, the smallest thick triangulated subcategory of ${\cal T}$ containing $G$ is ${\cal T}$ itself, or equivalently, for every object $E\in {\cal T}$, there is an object $E'\in {\cal T}$ and a tower of triangles in ${\cal T}$
$$\xymatrix@=4pt{
0=T_0 \ar[rr] && T_1 \ar[rr] \ar[dl] && T_2 \ar[dl] & \cdots & T_{m-1} \ar[rr] && T_m=E\oplus E' \ar[dl] \\
& G[n_1] \ar@{.>}[ul] && G[n_2] \ar@{.>}[ul] &&&& G[n_m] \ar@{.>}[ul] & }$$
with $m\in\mathbb{Z}_{\ge 0}$ and $n_i\in\Z$ for all $1\le i\le m$. In this case, the complexity of $E$ relative to $G$ is a function from $\mathbb{R}$ to  $\mathbb{R}_{\ge 0}$.

For some basic properties of complexity, we refer to \cite[Proposition 2.3]{DimHaiKatKon14}.

\medskip

\noindent{\bf Entropy.}
Let ${\cal T}$ be a triangulated category with a generator $G$, and $F$ an endofunctor of ${\cal T}$. The {\it entropy} of $F$ (\cite[Definition 2.5]{DimHaiKatKon14}) is the function $h_t(F):\mathbb{R}\to \mathbb{R}\cup\{-\infty\}$ in the real variable $t$ given by
$$h_t(F):=\lim\limits_{n\to\infty}\frac{1}{n}\log\dz_t(G,F^n(G)).$$
By convention, $h_t(F):=-\infty$ if $F$ is nilpotent.
It follows from \cite[Lemma 2.6]{DimHaiKatKon14} that the limit above exists in $\mathbb{R}\cup\{-\infty\}$ for every $t$ and it is independent of the choice of the generator $G$. Denote $h(F):=h_0(F)$.
If ${\cal T}\ne 0$ then $h({\rm Id}_{\cal T})=0$, but this may fail for other values of $t$.

Usually, one considers a triangulated category ${\cal T}$ admitting a dg enhancement (\cite[Definition 1.11]{CanSte17}) given by a {\it saturated dg category} $\tilde{\cal T}$ (\cite[Definition 2.4]{ToeVaq07}) which is just a triangulated dg category Morita equivalent to a proper smooth dg algebra (\cite[Proposition 5.4.2]{Toe11}), and an endofunctor $F:{\cal T}\to{\cal T}$ of ${\cal T}$ admitting a dg lift (\cite[Definition 6.7]{CanSte17}) $\tilde{F}:\tilde{\cal T}\to \tilde{\cal T}$.
In this case, entropy can be calculated by cohomology functors.

\begin{theorem} \label{Theorem-Entropy-Cohomology} {\rm (\cite[Theorem 2.7]{DimHaiKatKon14})}
Let $\mathcal{T}$ be a triangulated category admitting a dg enhancement given by a saturated dg category, and $F$ an endofunctor of $\mathcal{T}$ admitting a dg lift. Then for any generator $G$ of $\mathcal{T}$,
$$h_t(F)=\lim\limits_{n\to\infty}\frac{1}{n}\log\sum_{l\in\mathbb{Z}}\ \dim_k\Hom_{\cal T}(G,F^n(G)[l])\cdot e^{-lt}.$$
\end{theorem}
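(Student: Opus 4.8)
The plan is to compare the complexity-based definition of entropy with the homological quantity $N_t(E):=\sum_{l\in\mathbb{Z}}\dim_k\Hom_{\mathcal T}(G,E[l])\,e^{-lt}$, whose asymptotics form the right-hand side of the asserted formula. Since $\mathcal T$ is saturated it is Hom-finite and its graded $\Hom$-spaces are bounded, so $N_t(E)<\infty$ for every object $E$ and every real $t$; in particular the finite sum $N_t(G)$ is finite. Rather than invoking Fekete's lemma for $N_t$ separately, I would prove the two one-sided estimates $\limsup_{n}\frac1n\log N_t(F^n G)\le h_t(F)$ and $h_t(F)\le\liminf_{n}\frac1n\log N_t(F^n G)$; squeezing these simultaneously yields existence of the limit and its equality with $h_t(F)$ (existence of $h_t(F)$ itself is \cite[Lemma 2.6]{DimHaiKatKon14}). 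The two basic properties I would record first are shared by $\dz_t(G,-)$ and $N_t(-)$: invariance-scaling under shifts, $\dz_t(G,E[1])=e^{t}\dz_t(G,E)$ and $N_t(E[1])=e^{t}N_t(E)$; and subadditivity along a triangle $E_1\to E_3\to E_2\to E_1[1]$, namely $\dz_t(G,E_3)\le\dz_t(G,E_1)+\dz_t(G,E_2)$ by concatenating towers (\cite[Proposition 2.3]{DimHaiKatKon14}), and $N_t(E_3)\le N_t(E_1)+N_t(E_2)$ from the long exact sequence obtained by applying $\Hom_{\mathcal T}(G,-)$, which gives $\dim_k\Hom(G,E_3[l])\le\dim_k\Hom(G,E_1[l])+\dim_k\Hom(G,E_2[l])$ in each degree $l$.

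The easy estimate $\limsup_n\frac1n\log N_t(F^nG)\le h_t(F)$ is pure subadditivity. Given any tower building $F^n(G)\oplus E'$ from blocks $G[m_1],\dots,G[m_r]$, iterating the triangle inequality and the shift rule for $N_t$ gives $N_t(F^nG)\le N_t(F^nG\oplus E')\le\sum_{i=1}^{r}N_t(G[m_i])=N_t(G)\sum_{i=1}^{r}e^{m_it}$. Taking the infimum over all such towers yields $N_t(F^nG)\le N_t(G)\cdot\dz_t(G,F^nG)$, and applying $\frac1n\log(-)$ and letting $n\to\infty$ gives the claim, since $\frac1n\log N_t(G)\to 0$.

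The substantial estimate $h_t(F)\le\liminf_n\frac1n\log N_t(F^nG)$ is where saturatedness enters, and the goal is a uniform reconstruction bound $\dz_t(G,E)\le C(R,t)\cdot N_t(E)$ valid for all $E$ with a constant independent of $E$. I would first reduce to the model: set $R:=\REnd_{\mathcal T}(G)$, a proper smooth dg algebra, and use $\mathcal T\simeq\per(R)$ sending $G$ to the free module $R$ (\cite[Proposition 5.4.2]{Toe11}); then $\Hom_{\mathcal T}(G,E[l])\cong H^l(P)$ for the perfect dg $R$-module $P$ corresponding to $E$, so $N_t(E)=\sum_l\dim_k H^l(P)\,e^{-lt}$. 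To bound $\dz_t(R,P)$ I would exploit smoothness: the diagonal bimodule $R$ is perfect over $R\ot_k R^{\op}$, hence quasi-isomorphic to a finite twisted complex $\Tw$ built from finitely many shifted copies $(R\ot_k R^{\op})[s_j]$. Applying $P\ot^{\mathbb L}_R(-)$ expresses $P\simeq P\ot^{\mathbb L}_R R$ as a finite twisted complex of right modules of the form $\underline P\ot_k R$, i.e.\ as an iterated cone of shifted free modules indexed by a $k$-basis of the underlying complex of $P$ across the finitely many layers $j$. Finally I would trim: the underlying $k$-complex of $P$ splits, up to homotopy, as its cohomology plus a contractible complex, and the contractible free cells can be cancelled in pairs inside the twisted complex, reducing the cell count in degree $l$ to a bounded multiple of $\dim_k H^l(P)$, with shifts bounded by the layer-degrees $s_j$ shifted by the cohomological degrees of $P$. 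Properness ($\dim_k H^*(R)<\infty$) guarantees each layer contributes finitely, and gathering the weights produces the desired $C(R,t)=C(R)\sum_j e^{s_jt}$, uniform in $P$. Applying $\dz_t(G,E)\le C(R,t)N_t(E)$ to $E=F^nG$ and taking $\frac1n\log(-)$ kills the constant and yields the inequality.

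The main obstacle is precisely this last reconstruction bound. Over a non-formal dg algebra one cannot simply replace $P$ by its cohomology, so the trimming of acyclic free cells must be carried out across a twisted complex carrying genuinely higher differentials, which is exactly the passage to a minimal (semifree or $A_\infty$) model; the delicate points are that this trimming is performed uniformly in $P$ and that the cohomological degrees are tracked correctly so that the weighting by $e^{-lt}$ survives. Smoothness is what makes the diagonal resolution finite, and properness is what keeps each layer finite-dimensional, so these two hypotheses are the essential inputs making the uniform constant $C(R,t)$ available at all.
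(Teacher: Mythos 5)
The paper does not prove this statement; it is quoted directly from \cite[Theorem 2.7]{DimHaiKatKon14}, so there is no internal proof to compare against. Your proposal is, in substance, a correct reconstruction of the original Dimitrov--Haiden--Katzarkov--Kontsevich argument: a two-sided comparison $C_1(t)\,\epsilon_t(G,E)\le \dz_t(G,E)\le C_2(t)\,\epsilon_t(G,E)$ uniform in $E$, with the easy bound coming from subadditivity of $\epsilon_t$ along triangles plus the shift rule, and the hard bound from the finite resolution of the diagonal bimodule afforded by smoothness. The one place where you make the argument harder than it is, and where your stated ``main obstacle'' is not actually an obstacle, is the final trimming step: after applying $P\otimes^{L}_{R}(-)$ to a layer $(R\otimes_k R^{\op})[s_j]$ of the diagonal resolution you obtain $\underline{P}\otimes_k R[s_j]$, whose differential and right $R$-action involve $\underline{P}$ only as a complex of $k$-vector spaces; since $k$ is a field, $\underline{P}$ is homotopy equivalent to $H^\bullet(P)$ with zero differential, and this equivalence passes through $-\otimes_k R$, giving $\underline{P}\otimes_k R[s_j]\cong\bigoplus_{l} H^l(P)\otimes_k R[s_j-l]$ in the homotopy category of dg $R$-modules --- a finite direct sum of shifted copies of $R$. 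No minimal $A_\infty$-model, no cancellation of cells inside a twisted complex with higher differentials, and no formality of $R$ is needed, because you never replace $P$ by its cohomology as an $R$-module; only the external tensor factor is simplified. With that, each layer contributes $e^{s_jt}\sum_l\dim_k H^l(P)\,e^{-lt}$ to $\dz_t(R,P)$, the constant is $C_2(t)=\sum_j e^{s_jt}$, and the proof closes exactly as you describe.
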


Under the circumstance of Theorem~\ref{Theorem-Entropy-Cohomology}, we have $h_t({\rm Id}_{\cal T})=0$ for all $t\in \mathbb{R}$.

\medskip

Entropy has the following properties.

\begin{lemma} \label{Lemma-Entropy-Power} {\rm (Power \cite[2.2]{DimHaiKatKon14})}
Let ${\cal T}$ be a triangulated category with a generator, $F$ an endofunctor of ${\cal T}$, and $m\in\mathbb{Z}_{>0}$. Then $h_t(F^m)=m\cdot h_t(F)$.
\end{lemma}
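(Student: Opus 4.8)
The plan is to reduce the statement to the definition of entropy and then exploit the fact, already recorded in the excerpt, that the defining limit exists and is independent of the generator. Fix a generator $G$ of ${\cal T}$. Since $(F^m)^n = F^{mn}$, the definition of entropy gives directly
$$h_t(F^m)=\lim_{n\to\infty}\frac{1}{n}\log\dz_t(G,F^{mn}(G)).$$
So the whole problem is to compare this limit, taken along the indices $n$, with the limit defining $h_t(F)$, which runs along all indices $N$.

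The key observation is that by \cite[Lemma 2.6]{DimHaiKatKon14} the sequence $a_N:=\frac{1}{N}\log\dz_t(G,F^N(G))$ converges in $\mathbb{R}\cup\{-\infty\}$ to $h_t(F)$ for every $t$. Once the full sequence $(a_N)_{N\in\mathbb{N}}$ is known to converge, any subsequence converges to the same limit; in particular the subsequence indexed by $N=mn$ satisfies $a_{mn}\to h_t(F)$ as $n\to\infty$. Rewriting the displayed expression in terms of this subsequence,
$$\frac{1}{n}\log\dz_t(G,F^{mn}(G))=m\cdot\frac{1}{mn}\log\dz_t(G,F^{mn}(G))=m\cdot a_{mn},$$
and passing to the limit yields $h_t(F^m)=m\cdot h_t(F)$, as desired.

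The only point requiring care is the bookkeeping in the extended reals and the nilpotent case. If $F$ is nilpotent, then $F^{mn}(G)=0$ for $n$ large, so $F^m$ is nilpotent as well, and by convention both $h_t(F^m)$ and $m\cdot h_t(F)$ equal $-\infty$, which is consistent with the identity $m\cdot(-\infty)=-\infty$. Otherwise the argument above applies verbatim. I do not expect any genuine obstacle here: the entire content is the passage to a subsequence, which is legitimate precisely because \cite[Lemma 2.6]{DimHaiKatKon14} guarantees the existence (not merely the $\limsup$) of the defining limit.
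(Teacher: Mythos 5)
Your argument is correct and is essentially the standard one: since \cite[Lemma 2.6]{DimHaiKatKon14} gives existence of the full limit defining $h_t(F)$, the identity $(F^m)^n=F^{mn}$ reduces everything to passing to the subsequence $N=mn$, and your handling of the nilpotent/$-\infty$ case is fine. The paper itself gives no proof (it simply cites \cite[2.2]{DimHaiKatKon14}), and your reasoning matches the argument given there.
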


\begin{lemma} \label{Lemma-Entropy-Shift} {\rm (Shift \cite[Lemma 2.7]{KikShiTak20})}
Let $\mathcal{T}$ be a triangulated category admitting a dg enhancement given by a saturated dg category, $F$ an endofunctor of $\mathcal{T}$ admitting a dg lift, and $m\in\mathbb{Z}$. Then $h_t(F[m])=h_t(F)+mt$. In particular, $h_t([m])=mt$.
\end{lemma}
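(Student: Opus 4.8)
The plan is to reduce everything to the cohomological formula for entropy furnished by Theorem~\ref{Theorem-Entropy-Cohomology}, which applies here precisely because $\mathcal{T}$ is saturated. The only algebraic input I need is the interaction between the shift $[m]$ and the triangle functor $F$: since $F$ is exact it commutes with the shift up to natural isomorphism, $F(-)[1]\cong F((-)[1])$, and so by an easy induction $(F[m])^n\cong F^n[nm]$ for every $n\in\mathbb{N}$. This identity is the crux; it converts the composite $F[m]$ into a single aggregate shift of a power of $F$, which is what makes the weighted $\Hom$-sums transform cleanly.

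Granting this, I would fix any generator $G$ of $\mathcal{T}$ and substitute $(F[m])^n(G)\cong F^n(G)[nm]$ into Theorem~\ref{Theorem-Entropy-Cohomology}, obtaining
$$h_t(F[m])=\lim_{n\to\infty}\frac{1}{n}\log\sum_{l\in\mathbb{Z}}\dim_k\Hom_{\cal T}(G,F^n(G)[nm+l])\cdot e^{-lt}.$$
Reindexing the inner sum by $l':=nm+l$ rewrites the exponential weight as $e^{-(l'-nm)t}=e^{nmt}\,e^{-l't}$, so that
$$h_t(F[m])=\lim_{n\to\infty}\frac{1}{n}\Big(nmt+\log\sum_{l'\in\mathbb{Z}}\dim_k\Hom_{\cal T}(G,F^n(G)[l'])\cdot e^{-l't}\Big)=mt+h_t(F),$$
the last equality being Theorem~\ref{Theorem-Entropy-Cohomology} applied to $F$ itself. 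For the final assertion I would specialize to $F=\id_{\cal T}$: since $\id^n(G)=G$ the weighted sum is independent of $n$, whence $h_t(\id)=0$ and therefore $h_t([m])=mt$.

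The computation is essentially bookkeeping once the formula is in hand, so the only points demanding care are peripheral. First, the degenerate case: if $F$ is nilpotent then so is $F[m]$, because $(F[m])^n\cong F^n[nm]$ vanishes whenever $F^n$ does; both sides then read $-\infty$ and the equality persists under the stated convention $-\infty+mt=-\infty$. Second, one should note that the isomorphism $(F[m])^n\cong F^n[nm]$ is compatible with the quantities in the cohomological formula, since isomorphic objects have equal $\Hom$-dimensions in each degree. The main obstacle, and it is a mild one, is thus the clean verification that the triangle functor $F$ commutes with the shift so that the $n$-th power $(F[m])^n$ carries the single shift $[nm]$ rather than an uncontrolled interleaving of shifts and applications of $F$; everything else follows from the reindexing above.
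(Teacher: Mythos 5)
Your proof is correct. Note that the paper itself offers no proof of this lemma; it is imported verbatim from Kikuta--Shiraishi--Takahashi \cite[Lemma 2.7]{KikShiTak20}, so there is nothing internal to compare against. Your argument is the standard one: the identity $(F[m])^n\cong F^n[nm]$, which holds because a triangle functor commutes with the shift up to natural isomorphism, combined with the reindexing $l'=nm+l$ in the cohomological formula of Theorem~\ref{Theorem-Entropy-Cohomology}, and you correctly handle the nilpotent degenerate case and the base case $h_t(\mathrm{id})=0$. The one remark worth making is that the cohomological formula is slightly heavier machinery than needed: the basic property of complexity $\dz_t(E_1,E_2[m])=e^{mt}\,\dz_t(E_1,E_2)$ (from \cite[Proposition 2.3]{DimHaiKatKon14}) applied to $\dz_t(G,(F[m])^n(G))=e^{nmt}\dz_t(G,F^n(G))$ gives the same conclusion directly from the definition of entropy, without invoking saturatedness at all; the hypothesis that $\mathcal{T}$ be saturated is thus an artifact of the route you chose (and of the cited source), not of the statement. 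Either way, your reduction is sound and complete.
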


\begin{lemma} \label{Lemma-Entropy-Inverse} {\rm (Inverse \cite[Lemma 2.11]{FanFuOuc21})}
Let $\mathcal{T}$ be a triangulated category admitting a Serre functor and a dg enhancement given by a saturated dg category, and $F$ an autoequivalence of $\mathcal{T}$ admitting a dg lift. Then $h_t(F^{-1})=h_{-t}(F).$ In particular, $h(F^{-1})=h(F).$
\end{lemma}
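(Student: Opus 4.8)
The plan is to reduce everything to the cohomological entropy formula of Theorem~\ref{Theorem-Entropy-Cohomology}, which applies since $\mathcal{T}$ is saturated, and then to exploit Serre duality. Fix a generator $G$ of $\mathcal{T}$. First I would rewrite
$$h_t(F^{-1})=\lim_{n\to\infty}\frac{1}{n}\log\sum_{l\in\Z}\dim_k\Hom_{\cal T}(G,F^{-n}(G)[l])\,e^{-lt},$$
and use that $F^n$ is a triangle autoequivalence (hence fully faithful and commuting with $[l]$) to obtain the isomorphism $\Hom_{\cal T}(G,F^{-n}(G)[l])\cong\Hom_{\cal T}(F^n(G),G[l])$. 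This flips the roles of source and target, at the cost of replacing $G$ on the left by $F^n(G)$.

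Next I would invoke the Serre functor $S$ of $\mathcal{T}$, which exists because a saturated category is the perfect derived category of a proper smooth dg algebra. Serre duality $\Hom_{\cal T}(X,Y)\cong\Hom_{\cal T}(Y,S(X))^*$ gives
$$\dim_k\Hom_{\cal T}(F^n(G),G[l])=\dim_k\Hom_{\cal T}(G,S(F^n(G))[-l]).$$
Substituting and reindexing $l\mapsto -l$ turns the defining sum into $\sum_{l}\dim_k\Hom_{\cal T}(G,S(F^n(G))[l])\,e^{lt}$, and the adjunction identity $\Hom_{\cal T}(G,S(F^n(G))[l])\cong\Hom_{\cal T}(S^{-1}(G),F^n(G)[l])$ rewrites this as $\sum_{l}\dim_k\Hom_{\cal T}(S^{-1}(G),F^n(G)[l])\,e^{lt}$. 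Comparing with the formula for $h_{-t}(F)$, the claim $h_t(F^{-1})=h_{-t}(F)$ reduces to showing that replacing the source $S^{-1}(G)$ by $G$ does not change the exponential growth rate of this ``one-sided'' mass function.

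The main obstacle is precisely this last step, a change-of-generator estimate. Since $G$ is a generator and $S^{-1}$ is an autoequivalence, $S^{-1}(G)$ is again a generator, so each of $G$ and $S^{-1}(G)$ can be built from finitely many shifts of the other through a tower of triangles. Applying $\Hom_{\cal T}(-,F^n(G)[l])$ to such a tower and running the associated long exact sequences, I would bound $\dim_k\Hom_{\cal T}(S^{-1}(G),F^n(G)[l])\le\sum_j\dim_k\Hom_{\cal T}(G,F^n(G)[l-a_j])$ for fixed integers $a_1,\dots,a_r$ independent of $n$ and $l$; multiplying by $e^{lt}$, summing over $l$, and reindexing produces a factor $\sum_j e^{a_jt}$ that is constant in $n$. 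The symmetric inequality, obtained by building $G$ from $S^{-1}(G)$, sandwiches the two mass functions between bounded multiples of each other, so their $\frac{1}{n}\log$-limits coincide; by Theorem~\ref{Theorem-Entropy-Cohomology} the one with source $G$ equals $h_{-t}(F)$. This yields $h_t(F^{-1})=h_{-t}(F)$, and setting $t=0$ gives the final assertion $h(F^{-1})=h(F)$.
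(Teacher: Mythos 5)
Your argument is correct: the chain $\Hom(G,F^{-n}(G)[l])\cong\Hom(F^n(G),G[l])\cong\Hom(G,S(F^n(G))[-l])^*\cong\Hom(S^{-1}(G),F^n(G)[-l])^*$ together with the change-of-generator sandwich (valid since $S^{-1}(G)$ is again a generator and the tower of triangles gives bounds with constants independent of $n$) does yield $h_t(F^{-1})=h_{-t}(F)$. The paper itself offers no proof, citing \cite[Lemma 2.11]{FanFuOuc21}, and your argument is essentially the standard one given there, so there is nothing to add.
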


\subsection{Polynomial entropy}

Polynomial entropy is the asymptotically polynomial growth rate of complexity.

\medskip

\noindent{\bf Polynomial entropy.} Let ${\cal T}$ be a triangulated category with a generator $G$, and $F$ an endofunctor of ${\cal T}$. The {\it polynomial entropy} of $F$ (\cite[Definition 2.4]{FanFuOuc21}) is the function $h_t^\mathrm{pol}(F) : \mathbb{R} \to \mathbb{R} \cup \{-\infty\}$ in the real variable $t$ given by
$$h_t^\mathrm{pol}(F) := \limsup\limits_{n\to \infty} \frac{\log \dz_t(G,F^n(G))-n\cdot h_t(F)}{\log n}.$$
It is well-defined for any $t\in\mathbb{R}$ such that $h_t(F)\ne -\infty$. In particular, it is well-defined at $t=0$ since $h_0(F)\ge 0$. Denote $h^\mathrm{pol}(F) := h_0^\mathrm{pol}(F)$.

For a triangulated category $\mathcal{T}$ admitting a dg enhancement given by a saturated dg category, and an endofunctor $F$ of $\mathcal{T}$ admitting a dg lift, the polynomial entropy can be calculated by cohomology functors.

\begin{lemma} \label{Lemma-PolyEntropy-CohFun} {\rm (\cite[Lemma 2.7]{FanFuOuc21})}
Let $\mathcal{T}$ be a triangulated category admitting a dg enhancement given by a saturated dg category, and $F$ an endofunctor of $\mathcal{T}$ admitting a dg lift. Then for any generators $G,G'$ of $\mathcal{T}$,
$$h^\mathrm{pol}_t(F)=\limsup\limits_{n\to\infty} \frac{\log\epsilon_t(G,F^n(G'))-n\cdot h_t(F)} {\log n},$$
where $\epsilon_t(X,Y):=\sum\limits_{l\in\mathbb{Z}}\dim_k\Hom_{\cal T}(X,Y[l])\cdot e^{-lt}$ for all $X,Y\in{\cal T}$ and $t\in\mathbb{R}$.
\end{lemma}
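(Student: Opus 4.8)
\emph{Proof proposal.} The plan is to prove that, writing $G_0$ for the generator used in the definition of $h^{\mathrm{pol}}_t(F)$ and fixing any $t$ with $h_t(F)\neq-\infty$ (so that all the quantities below are eventually nonzero), the positive numbers $\dz_t(G_0,F^n(G_0))$ and $\epsilon_t(G,F^n(G'))$ have logarithms differing by an amount bounded independently of $n$, for every pair of generators $G,G'$. Granting this, the lemma is immediate: in the comparison of the two quotients the common term $-n\cdot h_t(F)$ cancels, while the bounded discrepancy between the logarithms is divided by $\log n\to\infty$ and hence does not affect the $\limsup$; in particular the value is independent of all the generators involved. I will obtain the comparison by interpolating through $\epsilon_t(G_0,F^n(G_0))$, in two stages.

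The basic tool is that, for fixed $t$, the weight $e^{-lt}$ is positive, so the long exact $\Hom$-sequence of any triangle shows that $\epsilon_t(-,-)$ is subadditive in each variable and additive on finite direct sums, and that $\epsilon_t(X,Y[n])=e^{nt}\epsilon_t(X,Y)$ and $\epsilon_t(X[n],Y)=e^{-nt}\epsilon_t(X,Y)$. From subadditivity in the second variable one gets the elementary upper estimate: if $E\oplus E'$ is built from $G[n_1],\dots,G[n_m]$ by a tower of triangles, then $\epsilon_t(G,E)\le\epsilon_t(G,E\oplus E')\le\sum_i\epsilon_t(G,G[n_i])=\epsilon_t(G,G)\cdot\sum_i e^{n_it}$, and taking the infimum over all such towers yields $\epsilon_t(G,E)\le\epsilon_t(G,G)\cdot\dz_t(G,E)$. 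The reverse estimate $\dz_t(G,E)\le\epsilon_t(G,E)$ is the crux, and is exactly where saturatedness is indispensable. Here I would pass through the dg enhancement: writing $B:=\RHom_{\cal T}(G,G)$ for the proper smooth dg endomorphism algebra of a dg lift of $G$, the functor $\RHom_{\cal T}(G,-)$ identifies $\cal T$ with the perfect derived category of $B$ and sends $E$ to the perfect $B$-module $\RHom_{\cal T}(G,E)$. Since $k$ is a field, I pass to the minimal $A_\infty$-model of $B$, over which a perfect module admits a minimal semifree resolution whose free generators form a finite homogeneous basis of its total cohomology; there are thus exactly $\dim_k\Hom_{\cal T}(G,E[l])$ generators in each degree $l$. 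Transporting this resolution back through the equivalence realizes $E$ as a tower of triangles built from the copies $G[-l]$ with these multiplicities, whence $\dz_t(G,E)\le\sum_l\dim_k\Hom_{\cal T}(G,E[l])\,e^{-lt}=\epsilon_t(G,E)$. Together the two estimates give $\epsilon_t(G,E)/\epsilon_t(G,G)\le\dz_t(G,E)\le\epsilon_t(G,E)$, so $\log\dz_t(G,E)$ and $\log\epsilon_t(G,E)$ differ by at most $\log\epsilon_t(G,G)$, uniformly in $E$; applying this with $E=F^n(G_0)$ settles the first stage. (These are precisely the estimates underlying Theorem~\ref{Theorem-Entropy-Cohomology}; the point here is only that they bound the \emph{difference of logarithms} uniformly in $n$, not merely that they yield equal exponential growth rates.)

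It remains to compare $\epsilon_t(G_0,F^n(G_0))$ with $\epsilon_t(G,F^n(G'))$, i.e. to change the generator in each slot. Fix a finite tower expressing $G'$ out of shifts $G_0[n'_1],\dots,G_0[n'_r]$, together with a converse tower expressing $G_0$ out of shifts of $G'$. Since $F^n$ is a triangle functor it carries the first tower to one expressing $F^n(G')$ out of $F^n(G_0)[n'_1],\dots,F^n(G_0)[n'_r]$, \emph{with the same shifts}; subadditivity of $\epsilon_t$ in its second variable then gives $\epsilon_t(G_0,F^n(G'))\le\bigl(\sum_i e^{n'_it}\bigr)\epsilon_t(G_0,F^n(G_0))$, while the converse tower gives the opposite inequality, both constants being independent of $n$. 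An entirely parallel argument, now using subadditivity in the \emph{first} variable and a finite tower building $G$ out of shifts of $G_0$ (and back), shows that replacing $G_0$ by $G$ in the first slot likewise costs only a bounded factor. Chaining these bounds, $\log\epsilon_t(G,F^n(G'))$ differs from $\log\epsilon_t(G_0,F^n(G_0))$, and hence from $\log\dz_t(G_0,F^n(G_0))$, by an amount bounded uniformly in $n$, and the lemma follows as explained in the first paragraph. The one genuinely deep step — which I expect to be the main obstacle — is the inequality $\dz_t(G,E)\le\epsilon_t(G,E)$ of the second paragraph, since it rests on the minimal $A_\infty$-model and minimal semifree resolution that are available only because $\cal T$ is saturated; every other step is a formal consequence of the subadditivity of $\epsilon_t$ and of the fact that $F^n$ preserves towers of triangles.
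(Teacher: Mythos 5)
The paper offers no proof of this lemma; it is quoted verbatim from \cite[Lemma 2.7]{FanFuOuc21}, so your attempt can only be judged on its own merits. Your overall architecture is the right one and matches the standard argument: show that $\log\dz_t(G_0,F^n(G_0))$ and $\log\epsilon_t(G,F^n(G'))$ differ by an amount bounded uniformly in $n$, so that after subtracting $n\cdot h_t(F)$ and dividing by $\log n$ the discrepancy vanishes in the $\limsup$. Your generator-change estimates (subadditivity of $\epsilon_t$ in each variable, transport of towers under the triangle functor $F^n$, the shift identities) are all correct, as is the upper bound $\epsilon_t(G,E)\le\epsilon_t(G,G)\cdot\dz_t(G,E)$.

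The step you yourself flag as the crux is, however, false as stated: the inequality $\dz_t(G,E)\le\epsilon_t(G,E)$ does not hold. Take $\mathcal{T}=\per(kQ)$ with $Q$ the quiver $1\to 2$, $G=kQ$, and $E=S_1$ the simple module at the source. Then $\Hom_{\mathcal T}(G,E[l])$ is one-dimensional for $l=0$ and zero otherwise, so $\epsilon_0(G,E)=1$; but $S_1$ is not a direct summand of any shift of $kQ$, so $\dz_0(G,E)\ge 2$. The underlying error is the claim that a minimal semifree resolution over the minimal $A_\infty$-model of $B=\REnd(G)$ has exactly $\dim_k\Hom_{\mathcal T}(G,E[l])$ free generators in degree $l$: the generator count of a minimal resolution is governed by $\dim_k(M\otimes^L_B S)$ for $S$ the semisimple quotient of $H^0(B)$ (equal to $2$ in the example), not by $\dim_k H^\bullet(M)$. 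What is true, and what your argument actually needs, is only the weaker bound $\dz_t(G,E)\le C(G,t)\,\epsilon_t(G,E)$ with $C(G,t)$ independent of $E$; this comes from \emph{smoothness} rather than minimality. Since the diagonal bimodule $B$ is perfect over $B^e$, it is built from finitely many shifts $(B\otimes B)[m_1],\dots,(B\otimes B)[m_p]$, and tensoring the perfect module $M=\RHom_{\mathcal T}(G,E)$ with this tower exhibits $M\cong M\otimes^L_BB$ as built from the free modules $(M\otimes_kB)[m_i]\simeq\bigoplus_l B[m_i-l]^{\dim_kH^l(M)}$, giving $\dz_t(G,E)\le\bigl(\sum_ie^{m_it}\bigr)\epsilon_t(G,E)$; this is precisely the estimate behind Theorem~\ref{Theorem-Entropy-Cohomology}. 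As only the boundedness of $|\log\dz_t-\log\epsilon_t|$ enters your argument, the extra additive constant $\log C(G,t)$ is harmless and the remainder of your proof goes through unchanged.
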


Polynomial entropy has the following properties.

\begin{lemma} \label{Lemma-PolyEntropy-Shift} {\rm (Shift \cite[Lemma 6.1]{FanFuOuc21})}
Let $\mathcal{T}$ be a triangulated category admitting a dg enhancement given by a saturated dg category, $F$ an endofunctor of $\mathcal{T}$ admitting a dg lift, and $m\in\mathbb{Z}$. Then $h_t^\mathrm{pol}(F[m]) = h_t^\mathrm{pol}(F)$.
In particular, $h_t^\mathrm{pol}([m]) = 0$.
\end{lemma}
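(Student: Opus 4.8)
The plan is to reduce everything to two facts: that the shift functor commutes with any triangle endofunctor, so that the $n$-th iterate of $F[m]$ is $F^n$ shifted by $mn$, and that shifting the target of the complexity scales it by a pure exponential in $t$. This exponential will cancel exactly against the correction term $n\cdot h_t(F[m])$ supplied by the entropy shift formula (Lemma~\ref{Lemma-Entropy-Shift}), leaving the defining expression for $h_t^\mathrm{pol}(F)$ untouched.

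First I would record the natural isomorphism of functors $(F[m])^n \cong F^n[mn]$. Since $F$ is a triangle functor it commutes with the shift up to natural isomorphism, so by induction $(F[m])^n(X)\cong F^n(X)[mn]$; in particular $(F[m])^n(G)\cong F^n(G)[mn]$. Next I would invoke the behaviour of complexity under a shift of the target. Among the basic properties collected in \cite[Proposition 2.3]{DimHaiKatKon14} is the scaling rule $\dz_t(G,X[l])=e^{lt}\dz_t(G,X)$, which one verifies directly: a tower building $X$ out of $\{G[n_i]\}$ shifts, object by object, to a tower building $X[l]$ out of $\{G[n_i+l]\}$ whose total cost is multiplied by $e^{lt}$, and this operation is reversible, so the infimum is scaled by exactly $e^{lt}$. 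Applying this with $X=F^n(G)$ and $l=mn$ gives
$$\dz_t(G,(F[m])^n(G))=e^{mnt}\,\dz_t(G,F^n(G)),$$
whence $\log\dz_t(G,(F[m])^n(G))=mnt+\log\dz_t(G,F^n(G))$.

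Finally, Lemma~\ref{Lemma-Entropy-Shift} gives $h_t(F[m])=h_t(F)+mt$; since $mt$ is finite, $h_t(F[m])\ne-\infty$ precisely when $h_t(F)\ne-\infty$, so both polynomial entropies are defined for the same values of $t$. Substituting into the numerator of the defining $\limsup$, the linear terms cancel:
$$\bigl(mnt+\log\dz_t(G,F^n(G))\bigr)-n\bigl(h_t(F)+mt\bigr)=\log\dz_t(G,F^n(G))-n\cdot h_t(F).$$
Dividing by $\log n$ and passing to the $\limsup$ yields $h_t^\mathrm{pol}(F[m])=h_t^\mathrm{pol}(F)$. For the final assertion I would specialize to $F=\id$: then $\id^n(G)=G$ makes $\dz_t(G,G)$ constant in $n$ and $h_t(\id)=0$, so the numerator $\log\dz_t(G,G)$ stays bounded while $\log n\to\infty$, giving $h_t^\mathrm{pol}([m])=h_t^\mathrm{pol}(\id)=0$.

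This argument is essentially mechanical, and I do not anticipate a genuine obstacle. The only points deserving care are the simultaneity of definedness just noted (so that the two sides are compared over the same domain in $t$), and checking that the tower-shifting construction is genuinely a bijection on admissible towers, so that it produces the \emph{equality} $\dz_t(G,X[l])=e^{lt}\dz_t(G,X)$ rather than merely one inequality.
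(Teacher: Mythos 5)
The paper does not prove this lemma --- it is quoted directly from \cite[Lemma 6.1]{FanFuOuc21} --- so there is no internal proof to compare against. Your argument is correct and is the natural (and, up to presentation, the standard) one: the commutation $(F[m])^n\cong F^n[mn]$ for a triangle functor, the exact scaling $\dz_t(G,X[l])=e^{lt}\dz_t(G,X)$ obtained from the bijective shift of towers, and the entropy shift formula of Lemma~\ref{Lemma-Entropy-Shift} make the linear terms cancel identically in the numerator of the defining $\limsup$, and your handling of the domain of definedness and of the case $F=\mathrm{id}$ is also sound.
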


\begin{lemma} \label{Lemma-PolyEntropy-Inverse} {\rm (Inverse \cite[Lemma 2.11]{FanFuOuc21})}
Let $\mathcal{T}$ be a triangulated category admitting a Serre functor and a dg enhancement given by a saturated dg category, and $F$ an autoequivalence of $\mathcal{T}$ admitting a dg lift. Then $h_t^\mathrm{pol}(F^{-1}) = h_{-t}^\mathrm{pol}(F)$.
In particular, $h^\mathrm{pol}(F^{-1})=h^\mathrm{pol}(F)$.
\end{lemma}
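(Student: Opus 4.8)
The plan is to reduce everything to the cohomological description of polynomial entropy in Lemma~\ref{Lemma-PolyEntropy-CohFun}, and then to transport the negative iterate $F^{-n}$ to a positive iterate $F^n$ using two elementary features of a saturated category: the invariance of the graded Hom-counting function $\epsilon_t$ under an autoequivalence applied simultaneously to both arguments, and Serre duality. Fix a generator $G$ and set $G':=S(G)$, which is again a generator since the Serre functor $S$ is an autoequivalence. First I would record the identity
$$\epsilon_t(G,F^{-n}(G))=\epsilon_{-t}(G,F^n(G')).$$
Indeed, applying the autoequivalence $F^n$ to both arguments gives $\dim_k\Hom_{\cal T}(G,F^{-n}(G)[l])=\dim_k\Hom_{\cal T}(F^n(G),G[l])$; Serre duality rewrites this as $\dim_k\Hom_{\cal T}(G,S(F^n(G))[-l])$; and since $S$ commutes with the autoequivalence $F^n$ we have $S(F^n(G))\cong F^n(G')$, so after the substitution $l\mapsto -l$ in the defining sum the identity drops out.

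With this in hand I would combine it with the entropy relation $h_t(F^{-1})=h_{-t}(F)$ of Lemma~\ref{Lemma-Entropy-Inverse}. Feeding the two identities into the formula of Lemma~\ref{Lemma-PolyEntropy-CohFun} applied to $F^{-1}$ at the parameter $t$, and then reading the outcome as the \emph{same} formula applied to $F$ at the parameter $-t$ with the pair of generators $G,G'$, yields
$$h^\mathrm{pol}_t(F^{-1})=\limsup\limits_{n\to\infty}\frac{\log\epsilon_{-t}(G,F^n(G'))-n\,h_{-t}(F)}{\log n}=h^\mathrm{pol}_{-t}(F).$$
It therefore remains to cancel the reflection $t\mapsto -t$, that is, to establish the evenness $h^\mathrm{pol}_{-t}(F)=h^\mathrm{pol}_t(F)$; granting this, the stated equality $h^\mathrm{pol}_t(F^{-1})=h^\mathrm{pol}_t(F)$ follows, and specialising to $t=0$ recovers $h^\mathrm{pol}(F^{-1})=h^\mathrm{pol}(F)$.

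\textbf{Main obstacle.} The evenness is the crux, and it is where the saturated hypothesis does the real work. Unlike the entropy, which is genuinely sensitive to the sign of $t$ (for instance $h_t([m])=mt$), the polynomial entropy records only the sub-exponential, Jordan-type growth of $\epsilon_t(G,F^n(G'))$ once the leading factor $e^{n\,h_t(F)}$ is divided out; this growth is governed by the maximal size of the Jordan blocks attached to the eigenvalues of maximal modulus of the invertible action induced by $F$ on the (graded) Grothendieck group, with the grading tracked by $q=e^{-t}$. The plan is then to show that this dominant Jordan-block size is invariant under the reflection $q\mapsto q^{-1}$, using the palindromic symmetry of the induced action forced by Serre duality together with the preservation of the non-degenerate Euler pairing by the autoequivalence $F$. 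I expect the genuinely delicate points to be the clean separation of the sub-exponential part from the exponential part for the unsigned graded dimensions, and the verification that the dominant Jordan datum is unchanged under $q\mapsto q^{-1}$; the concrete shadow of exactly this symmetry is the equality $s(\Phi)=s(\Phi^{-1})$ of polynomial growth rates of the Coxeter matrix and its inverse recorded later in Theorem~\ref{Theorem-RepInfAlg-Entropies}(2), which says precisely that the polynomial entropy of the Serre functor is insensitive to inversion.
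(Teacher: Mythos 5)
Your Serre-duality computation is correct, and it is in substance the entire proof of the result being quoted: the paper gives no argument of its own here, it simply imports \cite[Lemma 2.11]{FanFuOuc21}, and the proof of that lemma is exactly your chain
$$\epsilon_t(G,F^{-n}(G))=\epsilon_{-t}\bigl(G,F^n(S(G))\bigr),\qquad h_t(F^{-1})=h_{-t}(F),$$
fed into Lemma~\ref{Lemma-PolyEntropy-CohFun} (whose two-generator form is needed, as you note, since $S(G)$ replaces $G$). What this establishes is the reflected identity $h^\mathrm{pol}_t(F^{-1})=h^\mathrm{pol}_{-t}(F)$ — which is what \cite[Lemma 2.11]{FanFuOuc21} actually asserts, in parallel with Lemma~\ref{Lemma-Entropy-Inverse} — and hence, at $t=0$, the equality $h^\mathrm{pol}(F^{-1})=h^\mathrm{pol}(F)$ of the ``in particular'' clause.

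The step you flag as the crux, the evenness $h^\mathrm{pol}_{-t}(F)=h^\mathrm{pol}_t(F)$, is not a delicate remaining point: it is false in general, so no argument can close that gap, and the lemma as transcribed in this paper (same $t$ on both sides, for all $t$) is a misquotation of \cite[Lemma 2.11]{FanFuOuc21}. Concretely, let $A$ be the Kronecker algebra, so $\rho(\Phi^{\pm 1})=1$ and $s(\Phi^{\pm 1})=1$; let $\mathcal{T}=\per(k\times A)\simeq \per(k)\times\per(A)$, which is saturated since $k\times A$ is a finite dimensional hereditary algebra, and let $F:=([1],S_A^{-1})$ with generator $G=(k,A)$. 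Then $\epsilon_t(G,F^n(G))=e^{nt}+c_n e^{-nt}$, where $c_n=\dim_k\nu_1^{-n}(A)$ is the dimension of a preprojective module and grows linearly in $n$. Hence $h_t(F)=|t|$, and $h^\mathrm{pol}_t(F)=0$ for $t>0$ while $h^\mathrm{pol}_t(F)=1$ for $t<0$; symmetrically $h^\mathrm{pol}_t(F^{-1})=1$ for $t>0$ and $0$ for $t<0$. So $h^\mathrm{pol}_t(F^{-1})=h^\mathrm{pol}_{-t}(F)\ne h^\mathrm{pol}_t(F)$ for every $t\ne 0$: only the reflected identity you proved, and its $t=0$ specialization, are true. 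This costs the paper nothing, since in its only application (Theorem~\ref{Theorem-RepInfAlg-Entropies}(2)) the functions $h^\mathrm{pol}_t(\nu_d^{\pm 1})$ are constant in $t$, so the reflection is invisible there; your proven statement suffices for everything the paper does. Note finally that your proposed route to evenness was also structurally flawed: invoking $s(\Phi)=s(\Phi^{-1})$ from Theorem~\ref{Theorem-RepInfAlg-Entropies}(2) would be circular, because the paper derives that equality from this very lemma, and the counterexample shows the hoped-for symmetry under $q\mapsto q^{-1}$ simply does not exist — the dominant growth for $t>0$ and for $t<0$ can come from different orthogonal factors carrying different polynomial corrections.
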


\medskip

\noindent{\bf Polynomial growth rate of a linear operator.}
Sometimes, we can reduce the polynomial entropy of an endofunctor to the polynomial growth rate of a linear operator (\cite[Proposition 4.4]{FanFuOuc21}).
Let $f$ be a non-nilpotent linear operator on a finite dimensional complex vector space $V$ endowed with some norm $\|-\|$, and $\rho(f)$ the {\it spectral radius} of $f$, that is, the maximal absolute value of eigenvalues of $f$. The {\it polynomial growth rate} of $f$ (\cite[Definition 4.1]{FanFuOuc21}) is
$$s(f) := \lim\limits_{n\to\infty} \frac{\log\|f^n\|-n\cdot \log\rho(f)}{\log n}.$$
Here, $\|f^n\|=\sup\limits_{0\ne x\in V}\frac{\|f^n(x)\|}{\|x\|}$ (See \cite[1.1.10]{BelLyu88}). As all norms on the space of matrices are equivalent, $s(f)$ is independent of the choice of the norm.

\begin{lemma} \label{Lemma-PolyEntropy-LinearOperator}
{\rm (\cite[Lemma 4.2]{FanFuOuc21})}
Let $f$ be a non-nilpotent linear operator of a finite dimensional complex vector space endowed with some norm $\|-\|$. Then  the polynomial growth rate of $f$ is well-defined, and it is precisely one less than the maximal size of the Jordan blocks whose eigenvalues are of maximal absolute value $\rho(f)$. In particular, $s(f)$ is a nonnegative integer.
\end{lemma}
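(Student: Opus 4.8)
The plan is to reduce the statement to a direct computation with the Jordan canonical form of $f$. Since $f$ is non-nilpotent, $\rho:=\rho(f)>0$. Because all norms on a finite dimensional space—and on the associated space of matrices—are equivalent, and because replacing $f$ by a conjugate $PfP^{-1}$ changes $\|f^n\|$ only by the bounded factors $\|P\|,\|P^{-1}\|$, hence alters $\log\|f^n\|$ by $O(1)$ which is killed after dividing by $\log n$, I may assume $f$ is already in Jordan form $J=\bigoplus_i J_{k_i}(\lambda_i)$, where $J_k(\lambda)=\lambda I+N$ is the $k\times k$ Jordan block and $N$ is the nilpotent shift with $N^k=0$. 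I will also fix a convenient norm, for instance the maximum of the moduli of the matrix entries, so that $\|J^n\|=\max_i\|J_{k_i}(\lambda_i)^n\|$ since $J^n$ is block diagonal.

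First I would compute the powers of a single block. As $\lambda I$ and $N$ commute, the binomial theorem gives $J_k(\lambda)^n=\sum_{j=0}^{k-1}\binom{n}{j}\lambda^{n-j}N^j$. Every entry of $J_k(\lambda)^n$ thus has the form $\binom{n}{j}\lambda^{n-j}$ with $0\le j\le k-1$, and the top-right corner entry equals $\binom{n}{k-1}\lambda^{n-k+1}$, whose modulus is asymptotic to $\tfrac{1}{(k-1)!}\,n^{k-1}|\lambda|^{n}$. This corner entry dominates all others, so $\|J_k(\lambda)^n\|$ is bounded above and below by constant multiples of $n^{k-1}|\lambda|^n$, the constants depending only on $k$ and $\lambda$.

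Next I would assemble the blocks. Let $k^*:=\max\{k_i:|\lambda_i|=\rho\}$ be the maximal size of a Jordan block whose eigenvalue has modulus $\rho$ (well-defined since $\rho>0$). Among the block growth rates $n^{k_i-1}|\lambda_i|^n$, the exponential factor dominates first, so only the blocks with $|\lambda_i|=\rho$ survive asymptotically, and among those the largest polynomial factor wins; hence $\|J^n\|=\max_i\|J_{k_i}(\lambda_i)^n\|$ is bounded above and below by constant multiples of $n^{k^*-1}\rho^{\,n}$. Taking logarithms yields $\log\|f^n\|=\log\|J^n\|+O(1)=(k^*-1)\log n+n\log\rho+O(1)$, so that
$$\frac{\log\|f^n\|-n\log\rho(f)}{\log n}=(k^*-1)+\frac{O(1)}{\log n}\xrightarrow[n\to\infty]{}k^*-1.$$
Thus the limit defining $s(f)$ genuinely exists (the growth rate is well-defined, not merely a $\limsup$), equals $k^*-1$, and is a nonnegative integer as $k^*\ge 1$.

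The only point demanding care is the two-sided asymptotic for a single block: the upper bound $\|J_k(\lambda)^n\|\le C\,n^{k-1}|\lambda|^n$ is immediate from the binomial expansion (a sum of $k$ terms), whereas the lower bound requires exhibiting a single entry—the corner entry—of the stated size and invoking norm equivalence to bound $\|\cdot\|$ below by a constant times the modulus of that entry. I expect this block-level estimate to be the main, though entirely routine, obstacle; everything else is bookkeeping with the equivalence of norms and conjugation invariance.
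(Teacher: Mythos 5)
The paper gives no proof of this lemma---it is quoted verbatim from \cite[Lemma 4.2]{FanFuOuc21}---and your Jordan-form computation is correct and is essentially the standard argument used there: reduce to a Jordan matrix via norm equivalence and conjugation invariance, expand $J_k(\lambda)^n=\sum_{j=0}^{k-1}\binom{n}{j}\lambda^{n-j}N^j$, and extract the two-sided bound $\|J^n\|\asymp n^{k^*-1}\rho^n$ from the dominant corner entries of the blocks with $|\lambda_i|=\rho(f)$. The one point needing care (the lower bound via a single entry, and the harmless vanishing of blocks with $\lambda_i=0$) is exactly the point you flag and handle correctly.
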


\subsection{Hochschild (co)homology entropy}

Hochschild (co)homology entropy is defined not for a (triangle) endofunctor of a triangulated category but for a quasi-endofunctor of the perfect dg module category of a proper smooth dg category.

\medskip

\noindent{\bf Homotopy category of dg categories.} The category $\mathbf{dgcat}$ of (small) dg categories, whose morphisms are dg functors, is a closed symmetric monoidal category with tensor product $\otimes$ and internal Hom functor $\mathcal{H}om$. A {\it quasi-equivalence} $F$ from a dg category $\A$  to a dg category $\B$ is a dg functor $F: \A\to \B$ such that $F(x,y): \A(x,y) \to \B(F(x),F(y))$ is a quasi-isomorphism for all objects $x,y\in \A$ and the induced functor on homotopy categories $H^0(F):  H^0(\A)\to H^0(\B)$ is an equivalence.
The localization $\mathbf{hodgcat}$ of $\mathbf{dgcat}$ with respect to quasi-equivalences
is a closed symmetric monoidal category with tensor product $\otimes^L$ and internal Hom functor $\mathcal{RH}om$ (\cite[Theorem 4.5]{Kel06} and \cite[Theorem 1.3]{Toe07}).
For any two dg categories $\A$ and $\B$, let $\mathrm{rep}(\A,\B)$ be the full triangulated subcategory of the derived category
$\mathcal{D}(\A^\op\otimes \B)$ of dg $\A$-$\B$-bimodules formed by the dg $\A$-$\B$-bimodules $M$ such that the derived tensor product functor $- \otimes^L_\A M: \mathcal{D}(\A) \to \mathcal{D}(\B)$ takes the representable dg $\A$-modules to objects which are isomorphic to representable dg $\B$-modules.
Then we have a bijection
$\mathbf{hodgcat}(\A,\B) \cong \mathrm{Iso}(\mathrm{rep}(\A,\B))$
(\cite[Theorem 4.2]{Kel06} and \cite[Corollary 1.2]{Toe07}).
Let $\mathrm{rep}_\mathrm{dg}(\A,\B)$ be the full sub-dg category of the dg $\A$-$\B$-bimodule category $\Mod(\A^\op\otimes \B)$ whose objects are those of $\mathrm{rep}(\A,\B)$ which are cofibrant as dg $\A$-$\B$-bimodules. Then we have an isomorphism
$\mathcal{RH}\mathrm{om}(\A,\B) \cong \mathrm{rep}_\mathrm{dg}(\A,\B)$
in $\mathbf{hodgcat}$ (\cite[Theorem 4.5]{Kel06} and \cite[Theorem 1.3]{Toe07}).
Furthermore, we have equivalences
$H^0(\mathcal{RH}\mathrm{om}(\A,\B)) \simeq H^0(\mathrm{rep}_\mathrm{dg}(\A,\B)) \simeq \mathrm{rep}(\A,\B)$
and a bijection
$\mathbf{hodgcat}(\A,\B) \cong \mathrm{Iso}(H^0(\mathcal{RH}\mathrm{om}(\A,\B))).$ The objects in $\mathcal{RH}\mathrm{om}(\A,\B)$ or \linebreak $\mathrm{rep}_\mathrm{dg}(\A,\B)$ or $H^0(\mathcal{RH}\mathrm{om}(\A,\B))$ or $\mathrm{rep}(\A,\B)$ are called {\it quasi-functors}.

\medskip

\noindent{\bf Hochschild (co)homology entropy.}
Let $\A$ be a proper smooth dg category (\cite[Definition 2.4]{ToeVaq07}).
Denote by $\mathrm{per}_\mathrm{dg}(\A)$ the full sub-dg category of the dg category $\Mod \A$ of dg right $\A$-modules, whose objects are the cofibrant perfect dg right $\A$-modules, which is a dg enhancement of the perfect derived category $\per(\A)$ of dg right $\A$-modules.
It follows from \cite[Lemma 2.6]{ToeVaq07} that $\mathrm{per}_\mathrm{dg}(\A)$ is a saturated dg category.
Applying the formula in \cite[Page 298]{Toe11}, we obtain an isomorphism
$\mathcal{RH}\mathrm{om}(\mathrm{per}_\mathrm{dg}(\A),\mathrm{per}_\mathrm{dg}(\A)) \cong \mathrm{per}_\mathrm{dg}(\A^e)$ in $\mathbf{hodgcat}$,
where $\A^e:=\A^\op\otimes \A$ is the enveloping dg category of $\A$.
Thus we have {\it inverse Serre quasi-functor} $\tilde{S}^{-1}\in \mathcal{RH}\mathrm{om}(\mathrm{per}_\mathrm{dg}(\A),\mathrm{per}_\mathrm{dg}(\A))$ which corresponds to the {\it inverse dualizing complex} $\Theta \in \mathrm{per}_\mathrm{dg}(\A^e)$ (\cite[3.3]{Kel11}), that is, a cofibrant resolution of $\RHom_{\A^e}(\A,\A^e) \in \mathrm{per}(\A^e)$.
For any quasi-functor $\tilde{\Phi}\in \mathcal{RH}\mathrm{om}(\per_\mathrm{dg}(\A),\per_\mathrm{dg}(\A))$ and $i\in\mathbb{Z}$,
the {\it $i$-th Hochschild cohomology group} of $\tilde{\Phi}$ (\cite[Definition 2.6]{KikOuc20}) is $$HH^i(\tilde{\Phi}) := H^i(\mathcal{RH}\mathrm{om}(\per_\mathrm{dg}(\A),\per_\mathrm{dg}(\A)) (\mathrm{Id},\tilde{\Phi})),$$
and the {\it $i$-th Hochschild homology group} of $\tilde{\Phi}$ is $$HH_i(\tilde{\Phi}) := H^{-i}(\mathcal{RH}\mathrm{om}(\per_\mathrm{dg}(\A),\per_\mathrm{dg}(\A))(\tilde{S}^{-1},\tilde{\Phi})).$$
Furthermore, the {\it Hochschild cohomology entropy} of $\tilde{\Phi}$ (\cite[Definition 2.9]{KikOuc20}) is
$$h^{HH^\bullet}(\tilde{\Phi}) := \limsup\limits_{n\to\infty} \frac{1}{n} \log \sum\limits_{i\in\mathbb{Z}}\dim_kHH^i(\tilde{\Phi}^n),$$
and the {\it Hochschild homology entropy} of $\tilde{\Phi}$ is
$$h^{HH_\bullet}(\tilde{\Phi}) := \limsup\limits_{n\to\infty} \frac{1}{n} \log \sum\limits_{i\in\mathbb{Z}}\dim_kHH_i(\tilde{\Phi}^n).$$

Let $\A$ be a proper smooth dg category. A quasi-functor $\tilde{\Phi} \in \mathcal{RH}\mathrm{om}(\per_\mathrm{dg}(\A), \linebreak \per_\mathrm{dg}(\A))$ is also a quasi-functor in $H^0(\mathcal{RH}\mathrm{om}(\per_\mathrm{dg}(\A),\per_\mathrm{dg}(\A)))$, which corresponds to a morphism $\tilde{\Phi} \in \mathbf{hodgcat}(\per_\mathrm{dg}(\A), \per_\mathrm{dg}(\A))$ and further a functor $\Phi:=H^0(\tilde{\Phi}) : \per(\A) \to \per(\A)$. The following result compares the Hochschild (co)homology entropy of $\tilde{\Phi}$ with the entropy of $\Phi$.

\begin{theorem} \label{Theorem-HHEntropy-Entropy} {\rm (\cite[Theorem 2.10]{KikOuc20})}
Let $\A$ be a proper smooth dg category, $\tilde{\Phi} \in \mathcal{RH}\mathrm{om}(\per_\mathrm{dg}(\A),\per_\mathrm{dg}(\A))$ a quasi-endofunctor of $\per_\mathrm{dg}(\A)$, and $\Phi:=H^0(\tilde{\Phi})$ the induced endofunctor of $\per(\A)$. Then
$h^{HH^\bullet}(\tilde{\Phi}) \le h(\Phi)$ and $h^{HH_\bullet}(\tilde{\Phi}) \le h(\Phi).$
\end{theorem}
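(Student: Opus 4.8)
The plan is to translate everything into the language of $A$-bimodules and reduce both Hochschild entropies to the entropy computed by the cohomological formula of Theorem~\ref{Theorem-Entropy-Cohomology}. Under the isomorphism $\mathcal{RH}\mathrm{om}(\per_{\mathrm{dg}}(A),\per_{\mathrm{dg}}(A))\cong\per_{\mathrm{dg}}(A^e)$, the identity quasi-functor corresponds to the diagonal bimodule $A$, the quasi-functor $\tilde{\Phi}$ to a bimodule $M\in\per(A^e)$ with $\Phi(X)\cong X\otimes^L_A M$, and $\tilde{S}^{-1}$ to $\Theta=\RHom_{A^e}(A,A^e)$. Hence $HH^i(\tilde{\Phi}^n)=H^i\RHom_{A^e}(A,M^{\otimes n})$, where $M^{\otimes n}=M\otimes^L_A\cdots\otimes^L_A M$ represents $\Phi^n$, while $HH_i(\tilde{\Phi}^n)=H^{-i}\RHom_{A^e}(\Theta,M^{\otimes n})$. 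Since $A$ is smooth, $A\in\per(A^e)$, so the biduality isomorphism $\RHom_{A^e}(\RHom_{A^e}(A,A^e),N)\cong A\otimes^L_{A^e}N$ identifies $\RHom_{A^e}(\Theta,M^{\otimes n})$ with the honest Hochschild homology complex $A\otimes^L_{A^e}M^{\otimes n}$.

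Next I would fix a generator $G$ of $\per(A)$ and build the diagonal from the outer product. As $A$ is smooth, $A$ lies in the thick subcategory of $\per(A^e)$ generated by $G^\vee\otimes_k G$ with $G^\vee=\RHom_A(G,A)$; concretely there is a \emph{fixed} tower of triangles of some length $m$, depending only on $A$ and $G$ and not on $\Phi$ or $n$, expressing $A\oplus A'$ out of shifts $(G^\vee\otimes_k G)[s_1],\dots,(G^\vee\otimes_k G)[s_m]$. The single computation underlying everything is that, for any bimodule complex $N$, perfectness of $G$ gives $\RHom_{A^e}(G^\vee\otimes_k G,N)\cong\RHom_A(G,G\otimes^L_A N)$ and $(G^\vee\otimes_k G)\otimes^L_{A^e}N\cong(G\otimes^L_A N)\otimes^L_A G^\vee\cong\RHom_A(G,G\otimes^L_A N)$. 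Taking $N=M^{\otimes n}$ and using $G\otimes^L_A M^{\otimes n}=\Phi^n(G)$, each piece becomes $\RHom_A(G,\Phi^n(G))$, whose total cohomology has dimension $\sum_{l\in\Z}\dim_k\Hom_{\per(A)}(G,\Phi^n(G)[l])$.

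To conclude I would apply the exact functors $\RHom_{A^e}(-,M^{\otimes n})$ and $-\otimes^L_{A^e}M^{\otimes n}$ to the fixed tower and use subadditivity of total dimension along triangles: since the tower has $m$ steps and $A$ is a direct summand of its top term, both $\sum_i\dim_k HH^i(\tilde{\Phi}^n)$ and $\sum_i\dim_k HH_i(\tilde{\Phi}^n)$ are bounded above by $m\cdot\sum_{l}\dim_k\Hom_{\per(A)}(G,\Phi^n(G)[l])$. Dividing by $n$, taking $\log$ and $\limsup$, the term $\tfrac1n\log m\to 0$, and Theorem~\ref{Theorem-Entropy-Cohomology} at $t=0$ identifies $\lim_n\tfrac1n\log\sum_l\dim_k\Hom_{\per(A)}(G,\Phi^n(G)[l])$ with $h(\Phi)$; this yields $h^{HH^\bullet}(\tilde{\Phi})\le h(\Phi)$ and $h^{HH_\bullet}(\tilde{\Phi})\le h(\Phi)$ simultaneously.

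The crux is the uniformity of the resolution: the number of steps $m$ resolving the diagonal depends only on $A$ and not on the iterate $n$, which is precisely what forces the two exponential growth rates to be comparable and makes the error term $\tfrac1n\log m$ vanish. The most delicate points are the two uses of smoothness, first to guarantee that $A\in\per(A^e)$ admits such a finite outer-product resolution, and second to obtain the biduality isomorphism $\RHom_{A^e}(\Theta,-)\cong A\otimes^L_{A^e}(-)$ that reconciles the homological side with genuine Hochschild homology so that the same piecewise computation applies verbatim. Properness enters only to ensure that each $\RHom_A(G,\Phi^n(G))$ has finite total dimension, so that all the sums above are finite.
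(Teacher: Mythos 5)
The paper does not prove this statement: it is quoted verbatim from \cite[Theorem 2.10]{KikOuc20}, so there is no in-paper argument to compare against. Your proof is correct and is essentially the argument of the cited source: reduce both Hochschild entropies to $\mathrm{tdim}_k\RHom_{A^e}(A,M^{\otimes^L_A n})$ and $\mathrm{tdim}_k(A\otimes^L_{A^e}M^{\otimes^L_A n})$ (exactly as in the paper's own ``Reformulation'' subsection), use smoothness to build the diagonal bimodule from finitely many shifts of $G^\vee\otimes_k G$ by a tower of length $m$ independent of $n$, identify each piece with $\RHom_A(G,\Phi^n(G))$ via perfectness of $G$, and let the error $\tfrac1n\log m$ vanish against Theorem~\ref{Theorem-Entropy-Cohomology} at $t=0$.
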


Furthermore, Kikuta and Ouchi posed the following question on relations between entropy and Hochschild (co)homology entropy.

\begin{question} \label{Question-Kikuta-Ouchi} {\rm (\cite[Question 2.13]{KikOuc20})
Let $\A$ be a proper smooth dg category, $\tilde{\Phi} \in \mathcal{RH}\mathrm{om}(\per_\mathrm{dg}(\A),\per_\mathrm{dg}(\A))$ a quasi-endofunctor of $\per_\mathrm{dg}(\A)$, and $\Phi:=H^0(\tilde{\Phi})$ the induced endofunctor of $\per(\A)$. When do the equalities $h^{HH^\bullet}(\tilde{\Phi}) = h^{HH_\bullet}(\tilde{\Phi}) = h(\Phi)$ hold?
}\end{question}

It was pointed by Atsushi Takahashi (\cite[Page 231]{KikOuc20}) that the equalities in Question~\ref{Question-Kikuta-Ouchi} do not always hold in general. We will provide some positive answers, cf. Remark 1 (1) and Remark 2 (3).

\medskip

\noindent{\bf Reformulation of Hochschild (co)homology entropy.}
Let $\A$ be a proper smooth dg category. Under the isomorphism $\mathcal{RH}\mathrm{om}(\mathrm{per}_\mathrm{dg}(\A),\mathrm{per}_\mathrm{dg}(\A)) \cong \mathrm{per}_\mathrm{dg}(\A^e)$ in $\mathbf{hodgcat}$, any object $\tilde{\Phi} \in \mathcal{RH}\mathrm{om}(\per_\mathrm{dg}(\A),\per_\mathrm{dg}(\A))$ corresponds to an object $M\in \per_\mathrm{dg}(\A^e)$. In particular, the identity functor $\mathrm{Id}$ on $\per_\mathrm{dg}(\A)$ corresponds to an object $\Delta\in\per_\mathrm{dg}(\A^e)$ which is a cofibrant resolution of the dg $\A$-bimodule $\A\in\per(\A^e)$.
Then for all $i\in\mathbb{Z}$ and $n\in\mathbb{Z}_{\ge 0}$, we have isomorphisms
$$\begin{aligned}
HH^i(\tilde{\Phi}^n) & = H^i(\mathcal{RH}\mathrm{om}(\mathrm{per}_\mathrm{dg}(\A),\mathrm{per}_\mathrm{dg}(\A)) (\mathrm{Id},\tilde{\Phi}^n)) \\
& \cong H^i(\mathrm{per}_\mathrm{dg}(\A^e)(\Delta,M^{\otimes_\A n})) \\
& \cong H^i(\RHom_{\A^e}(\A,M^{\otimes^L_\A n}))
\end{aligned}$$
and
$$\begin{aligned}
HH_i(\tilde{\Phi}^n) & = H^{-i}(\mathcal{RH}\mathrm{om}(\mathrm{per}_\mathrm{dg}(\A),
\mathrm{per}_\mathrm{dg}(\A))(\tilde{S}^{-1},\tilde{\Phi}^n)) \\
& \cong H^{-i}(\mathrm{per}_\mathrm{dg}(\A^e)(\Theta,M^{\otimes_\A n})) \\
& \cong H^{-i}(\RHom_{\A^e}(\RHom_{\A^e}(\A,\A^e),M^{\otimes^L_\A n})) \\
& \cong H^{-i}(M^{\otimes^L_\A n} \otimes^L_{\A^e} \RHom_{\A^e}(\RHom_{\A^e}(\A,\A^e),\A^e)) \\
& \cong H^{-i}(M^{\otimes^L_\A n} \otimes^L_{\A^e} \A) \\
& \cong H^{-i}(\A \otimes^L_{\A^e} M^{\otimes^L_\A n}).
\end{aligned}$$
So Hochschild (co)homology entropy is well-defined for any quasi-endofunctor $\tilde{\Phi}\in \mathcal{RH}\mathrm{om}(\mathrm{per}_\mathrm{dg}(\A),\mathrm{per}_\mathrm{dg}(\A))$ or $H^0(\mathcal{RH}\mathrm{om}(\mathrm{per}_\mathrm{dg}(\A),\mathrm{per}_\mathrm{dg}(\A)))$ or dg $\A$-bimodule $M\in \mathrm{per}_\mathrm{dg}(\A^e)$ or $\mathrm{per}(\A^e)$, and can be calculated by
$$h^{HH^\bullet}(\tilde{\Phi}) = h^{HH^\bullet}(M) := \limsup\limits_{n\to\infty} \frac{1}{n} \log \mathrm{tdim}_k \RHom_{\A^e}(\A,M^{\otimes^L_\A n}),$$
and
$$h^{HH_\bullet}(\tilde{\Phi}) = h^{HH_\bullet}(M) := \limsup\limits_{n\to\infty} \frac{1}{n} \log \mathrm{tdim}_k (\A \otimes^L_{\A^e} M^{\otimes^L_\A n}).$$

\subsection{Serre dimensions}

Serre dimensions are new invariants of an Ext-finite triangulated category with a generator and a Serre functor, or of a proper smooth dg algebra.

\medskip

\noindent{\bf (Inverse) Serre (quasi-)functor.} A triangulated category $\mathcal{T}$ is {\it Hom-finite} if  $\dim_k\Hom_\mathcal{T}(X,Y) <\infty$ for all $X,Y\in \mathcal{T}$.
A {\it Serre functor} (\cite[Definition 3.1]{BonKap91}) of a Hom-finite triangulated category $\mathcal{T}$ is an autoequivalence  $S:\mathcal{T}\to \mathcal{T}$ such that there is a bifunctorial isomorphism $\Hom_\mathcal{T}(X,Y)^* \cong \Hom_\mathcal{T}(Y,S(X))$.
The Serre functor, if it exists, is unique up to isomorphism.
The perfect derived category $\mathrm{per}(A)$ of a proper smooth dg algebra $A$ has Serre functors.
Indeed, $S:= -\otimes^L_AA^*$ is a Serre functor of $\mathrm{per}(A)$ with the {\it inverse Serre functor} $S^{-1}:=\RHom_A(A^*,-) \cong -\otimes^L_A\RHom_A(A^*,A)$ as a quasi-inverse.
Let $\nabla$ and $\Theta$ be any cofibrant resolutions of dg $A$-bimodule $A^*$ and $\RHom_A(A^*,A)$ respectively.
The quasi-endofunctors $\tilde{S},\tilde{S}^{-1} \in \mathcal{RH}\mathrm{om}(\mathrm{per}_\mathrm{dg}(A),\mathrm{per}_\mathrm{dg}(A))$ corresponding to  $\nabla,\Theta\in \mathrm{per}_\mathrm{dg}(A^e)$ are called {\it Serre quasi-functor} and {\it inverse Serre quasi-functor} respectively. Note that we have $\RHom_A(A^*,A) \cong \RHom_{A^e}(A,\RHom_k(A^*,A)) \linebreak \cong \RHom_{A^e}(A,A^e)$ in $\per(A^e)$. So the inverse Serre quasi-functor defined here coincides with that in 2.3.

\medskip

\noindent{\bf Upper (lower) Serre dimension.}
A triangulated category $\mathcal{T}$ is {\it Ext-finite} if  $\sum\limits_{i\in\mathbb{Z}}\dim_k\Hom_\mathcal{T}(X,Y[i])<\infty$ for all $X,Y\in \mathcal{T}$.
Let ${\cal T}$ be an Ext-finite triangulated category with a generator and a Serre functor $S:{\cal T}\to {\cal T}$. For any two generators $G,G'$ of ${\cal T}$, the {\it upper Serre dimension} of ${\cal T}$ (\cite[Definition 5.3]{ElaLun21}) is
$$\overline{\mathrm{Sdim}}{\cal T} := \limsup\limits_{n\to\infty}\frac{-\inf\{i\ |\ \Hom_{\cal T}(G,S^n(G')[i]) \ne 0\}}{n},$$
and the {\it lower Serre dimension} of ${\cal T}$ is
$$\underline{\mathrm{Sdim}}{\cal T} := \liminf\limits_{n\to\infty}\frac{-\sup\{i\ |\ \Hom_{\cal T}(G,S^n(G')[i]) \ne 0\}}{n}.$$
It follows from \cite[Lemma 6.3 and Definition 6.4]{ElaLun21} that $\overline{\mathrm{Sdim}}{\cal T}$ and $\underline{\mathrm{Sdim}}{\cal T}$ are well-defined, i.e., they are independent of the choice of the generators $G$ and $G'$.

\medskip

Let $A$ be a proper smooth dg algebra. The {\it upper Serre dimension of $A$} is $\overline{\mathrm{Sdim}}A := \overline{\mathrm{Sdim}}\ \per(A)$, and the {\it lower Serre dimension of $A$} is
$\underline{\mathrm{Sdim}}A := \underline{\mathrm{Sdim}}\ \per(A)$.
It follows from \cite[Proposition 5.5]{ElaLun21} that
$$\overline{\mathrm{Sdim}}A = \lim\limits_{n\to\infty}\frac{-\inf\ (A^*)^{\ot^L_An}}{n},$$
and
$$\underline{\mathrm{Sdim}}A = \lim\limits_{n\to\infty}\frac{-\sup\ (A^*)^{\ot^L_An}}{n},$$
where for any object $X \in \D^b(A^e)$,
$$\inf X := \inf\{i\ |\ H^i(X)\ne 0\},\quad \sup X := \sup\{i\ |\ H^i(X)\ne 0\}.$$
Moreover, the above two limits are finite.

\section{Higher hereditary algebras}

In this section, for a higher hereditary algebra, we will calculate its upper (lower) Serre dimension, the entropy and polynomial entropy of Serre functor, and Hochschild (co)homology entropy of Serre quasi-functor. Higher representation-finite algebras are twisted fractionally Calabi-Yau algebras for which these invariants can be calculated easily.

\medskip

\subsection{Twisted fractionally Calabi-Yau algebras}

Twisted fractionally Calabi-Yau algebras are a generalization of fractionally Calabi-Yau algebra, and has higher representation-finite algebras as typical examples.

\medskip

\noindent{\bf Twisted fractionally Calabi-Yau algebra.}
Let $A$ be a finite dimensional algebra of finite global dimension.
Then the {\it Nakayama functor}
$$\nu = \nu_A := \RHom_A(-,A)^*: \D^b(A) \to \D^b(A)$$
is isomorphic to the Serre functor $S=S_A:= - \otimes^L_A A^* : \D^b(A)\to\D^b(A)$.
Let $\phi$ be an algebra endomorphism of $A$. Then $\phi$ induces the restriction functor $\phi^*: \Mod A\to\Mod A, M\mapsto M_\phi$, where $M_\phi$ is the right $A$-module defined by the same vector space as $M$ but the new right $A$-module action $m\cdot a:=m\phi(a)$ for all $m\in M$ and $a\in A$. Furthermore, $\phi^*$ induces the derived functor $\phi^*:\D^b(A)\to\D^b(A)$ which is isomorphic to the derived tensor product functor $- \ot^L_A A_\phi : \D^b(A) \to \D^b(A)$, where $A_\phi$ is the twisted $A$-bimodule $A$ with the left and right $A$-module actions $b\cdot a \cdot c := ba\phi(c)$ for all $a,b,c\in A$.
A finite dimensional algebra $A$ of finite global dimension is {\it twisted fractionally Calabi-Yau and of Calabi-Yau dimension $\frac{q}{p}$} or {\it twisted $\frac{q}{p}$-Calabi-Yau} (\cite[Definition 0.3]{HerIya11}) if there exist a positive integer $p\in\mathbb{Z}_{>0}$, an integer $q\in\mathbb{Z}$, and an algebra automorphism $\phi$ of $A$, such that there is a functorial isomorphism $\nu^p \cong \phi^*[q]$ of autoequivalences on $\D^b(A)$, or equivalently, $\nu^p(A) \cong A[q]$ in $\D^b(A)$ (\cite[Proposition 4.3]{HerIya11}). When $\phi$ is the identity automorphism of $A$, $A$ is said to be {\it fractionally Calabi-Yau and of Calabi-Yau dimension $\frac{q}{p}$} or {\it $\frac{q}{p}$-Calabi-Yau}.
(Twisted) $\frac{q}{p}$-Calabi-Yau algebras are also (twisted) $\frac{qk}{pk}$-Calabi-Yau algebras for all positive integers $k$. The converse does not hold in general, but the Calabi-Yau dimension $\frac{q}{p}$ is unique as a rational number.

\medskip

\noindent{\bf Hochschild (co)homology entropy of (inverse) Serre quasi-functor.} The following result implies that Hochschild cohomology entropy coincides with Hochschild homology entropy for the (inverse) Serre quasi-functor on the perfect dg module category of a proper smooth dg algebra.

\begin{proposition} \label{Proposition-HHCEntropy=HHEntropy}
Let $A$ be a proper smooth dg algebra, and $\tilde{S}$ the Serre quasi-functor on the perfect dg module category $\per_\mathrm{dg}(A)$ of $A$. Then
$h^{HH^\bullet}(\tilde{S})=h^{HH_\bullet}(\tilde{S})$ and $h^{HH^\bullet}(\tilde{S}^{-1})=h^{HH_\bullet}(\tilde{S}^{-1})$.
\end{proposition}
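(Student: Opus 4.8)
The plan is to reduce both the Hochschild cohomology and the Hochschild homology entropy of $\tilde{S}$ to one and the same sequence of total dimensions, the two differing only by a constant shift of the indexing, and then to observe that such a shift cannot alter the exponential growth rate. The engine behind this is that, since $A$ is proper smooth, the Serre functor $S=-\otimes^L_A A^*$ is an \emph{autoequivalence} of $\per(A)$; hence the $A$-bimodule $A^*$ (of which $\nabla$ is a cofibrant resolution) is invertible in the monoidal category $(\per(A^e),\otimes^L_A,A)$, with inverse $\Theta\cong\RHom_{A^e}(A,A^e)$, i.e. $A^*\otimes^L_A\Theta\cong A\cong\Theta\otimes^L_A A^*$. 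This invertibility is exactly what distinguishes the present proper smooth situation and makes the equalities possible.

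First I would invoke the reformulation of Hochschild (co)homology entropy from 2.3: since $\tilde{S}$ corresponds to the bimodule $A^*$,
$$h^{HH^\bullet}(\tilde{S})=\limsup_{n\to\infty}\tfrac1n\log\mathrm{tdim}_k\RHom_{A^e}(A,(A^*)^{\otimes^L_A n}),\quad h^{HH_\bullet}(\tilde{S})=\limsup_{n\to\infty}\tfrac1n\log\mathrm{tdim}_k\bigl(A\otimes^L_{A^e}(A^*)^{\otimes^L_A n}\bigr).$$
Next I would rewrite the homology side using $k$-duality (which preserves total dimension, as $\dim_k H^i(X^*)=\dim_k H^{-i}(X)$) together with the tensor-hom adjunction $(A\otimes^L_{A^e}N)^*\cong\RHom_{A^e}(N,A^*)$, giving $\mathrm{tdim}_k(A\otimes^L_{A^e}N)=\mathrm{tdim}_k\RHom_{A^e}(N,A^*)$. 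For the cohomology side I would use that $A^e$ is again proper smooth, so $\per(A^e)$ carries a Serre functor $S_{A^e}$, and Serre duality yields $\mathrm{tdim}_k\RHom_{A^e}(A,N)=\mathrm{tdim}_k\RHom_{A^e}(N,S_{A^e}(A))$; since $S_{A^e}(M)\cong A^*\otimes^L_A M\otimes^L_A A^*$, one has $S_{A^e}(A)\cong(A^*)^{\otimes^L_A 2}$.

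With these two identities in hand, I set $N=(A^*)^{\otimes^L_A n}$ and use the invertibility of $A^*$ to strip off the extra copies of $A^*$ in the second argument: applying the autoequivalence $-\otimes^L_A\Theta^{\otimes^L_A c}$ to both arguments of $\RHom_{A^e}$ gives $\RHom_{A^e}((A^*)^{\otimes^L_A n},(A^*)^{\otimes^L_A c})\cong\RHom_{A^e}((A^*)^{\otimes^L_A (n-c)},A)$. Writing $g(m):=\mathrm{tdim}_k\RHom_{A^e}((A^*)^{\otimes^L_A m},A)$, the homology side ($c=1$) becomes $g(n-1)$ and the cohomology side ($c=2$) becomes $g(n-2)$. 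Since $\limsup_n\tfrac1n\log g(n-c)=\limsup_m\tfrac1m\log g(m)$ for any fixed $c$, both entropies equal $\limsup_m\tfrac1m\log g(m)$, which proves $h^{HH^\bullet}(\tilde{S})=h^{HH_\bullet}(\tilde{S})$. The same argument applies verbatim with $A^*$ replaced throughout by $\Theta$ (whose inverse is $A^*$), producing constant index shifts in the other direction and yielding $h^{HH^\bullet}(\tilde{S}^{-1})=h^{HH_\bullet}(\tilde{S}^{-1})$.

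The main obstacle is conceptual rather than computational: recognizing that the asymmetry between Hochschild cohomology ($\RHom_{A^e}(A,-)$) and Hochschild homology ($A\otimes^L_{A^e}-$) is, after $k$-duality and Serre duality on $\per(A^e)$, precisely a twist by the invertible bimodule $A^*$, so that it merely shifts the exponent by the constant $2-1=1$ and therefore cannot change the growth rate. The supporting technical points — that $A^*$ is $\otimes^L_A$-invertible with inverse $\Theta$, that $S_{A^e}(A)\cong(A^*)^{\otimes^L_A 2}$ via the identification of the Serre functor of $A^e$ with the two-sided twist $A^*\otimes^L_A(-)\otimes^L_A A^*$, and that all complexes involved have finite total dimension — all rest on $A$, hence $A^e$, being proper and smooth; carefully verifying the Serre functor identification for $A^e$ is where the real care is needed.
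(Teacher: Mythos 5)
Your proof is correct, but it takes a genuinely different route from the paper's. The paper's argument is a single chain of isomorphisms: smoothness (perfectness of $A$ in $\D(A^e)$) gives $\RHom_{A^e}(A,N)\cong N\otimes^L_{A^e}\RHom_{A^e}(A,A^e)\cong N\otimes^L_{A^e}\Theta$, and then the invertibility $\Theta\otimes^L_A A^*\cong A$ converts $(A^*)^{\otimes^L_An}\otimes^L_{A^e}\Theta$ directly into $A\otimes^L_{A^e}(A^*)^{\otimes^L_A(n-1)}$; that is, the Hochschild cohomology complex of $\tilde{S}^n$ \emph{is} the Hochschild homology complex of $\tilde{S}^{n-1}$, and the identity of the two entropies follows at once. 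You instead route both sides through a common auxiliary sequence $g(m)=\mathrm{tdim}_k\RHom_{A^e}((A^*)^{\otimes^L_Am},A)$, using $k$-duality for the homology side and Serre duality on $\per(A^e)$ together with the identification $S_{A^e}(A)\cong A^*\otimes^L_AA^*$ for the cohomology side. Both arguments ultimately rest on the same inputs (properness, smoothness, and the $\otimes^L_A$-invertibility of $A^*$ with inverse $\Theta\cong\RHom_{A^e}(A,A^e)$), and both silently use that a bounded shift of the index does not change $\limsup_n\frac{1}{n}\log(-)$, which is harmless here since the sequences grow at most exponentially. The paper's version is shorter and avoids the extra verification of the Serre functor of $A^e$; yours makes the conceptual point — that the cohomology/homology asymmetry is exactly a twist by the invertible bimodule $A^*$ — more visible, at the cost of that one additional standard but nontrivial identification, which you correctly flag as the place requiring care.
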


\begin{proof} Since $A$ is a proper smooth dg algebra, we have isomorphisms
$$\RHom_A(A^*,A) \cong \RHom_{A^e}(A,\RHom_k(A^*,A)) \cong \RHom_{A^e}(A,A^e)$$
and $A^*\otimes^L_A\RHom_A(A^*,A) \cong A$ in $\per(A^e)$.
Then we get isomorphisms
$$\begin{aligned}\RHom_{A^e}(A,(A^*)^{\otimes^L_An}) & \cong (A^*)^{\otimes^L_An}\otimes^L_{A^e}\RHom_{A^e}(A,A^e) \\ & \cong (A^*)^{\otimes^L_An}\otimes^L_{A^e}\RHom_A(A^*,A) \\ & \cong A\otimes^L_{A^e}(A^*)^{\otimes^L_A{n-1}}\end{aligned}$$
in $\D^b(k)$ for all $n\in\mathbb{Z}_{>0}$. Thus
$$\begin{aligned}
h^{HH^\bullet}(\tilde{S}) & = \limsup\limits_{n\to\infty}\frac{1}{n}\log \mathrm{tdim}_k \RHom_{A^e}(A,(A^*)^{\otimes^L_An}) \\
& = \limsup\limits_{n\to\infty}\frac{1}{n}\log \mathrm{tdim}_k(A\otimes^L_{A^e}(A^*)^{\otimes^L_A{n-1}}) \\
& = h^{HH_\bullet}(\tilde{S}).
\end{aligned}$$

Next, we have isomorphisms
$$\begin{aligned}
\RHom_{A^e}(A,\RHom_A(A^*,A)^{\otimes^L_An}) & \cong \RHom_A(A^*,A)^{\otimes^L_An} \otimes^L_{A^e}\RHom_{A^e}(A,A^e) \\
& \cong \RHom_A(A^*,A)^{\otimes^L_An} \otimes^L_{A^e}\RHom_A(A^*,A) \\
& \cong A\otimes^L_{A^e}\RHom_A(A^*,A)^{\otimes^L_A{n+1}}
\end{aligned}$$
in $\D^b(k)$ for all $n\in\mathbb{Z}_{>0}$. Thus
$$\begin{aligned}
h^{HH^\bullet}(\tilde{S}^{-1}) & = \limsup\limits_{n\to\infty}\frac{1}{n}\log\mathrm{tdim}_k \RHom_{A^e}(A,\RHom_A(A^*,A)^{\otimes^L_An}) \\
& = \limsup\limits_{n\to\infty}\frac{1}{n}\log\mathrm{tdim}_k (A\otimes^L_{A^e}\RHom_A(A^*,A)^{\otimes^L_A{n+1}}) \\
& = h^{HH_\bullet}(\tilde{S}^{-1}).
\end{aligned}$$

Now we have finished the proof of Proposition~\ref{Proposition-HHCEntropy=HHEntropy}.
\end{proof}

\medskip

\noindent{\bf Entropies and Serre dimensions for twisted fractionally Calabi-Yau algebras.} Our first main result is the following, in which (1), (2) and (4) generalize the corresponding results \cite[2.6.1]{DimHaiKatKon14}, \cite[Remark 6.3]{FanFuOuc21} and \cite[Proposition 3.17]{Ela22} for fractionally Calabi-Yau algebras.

\begin{theorem} \label{Theorem-TFCY-Entropies}
Let $A$ be a twisted $\frac{q}{p}$-Calabi-Yau algebra. Then

\medskip

{\rm (1)} the entropy of Serre functor $h_t(S)=\frac{q}{p}t$.

\medskip

{\rm (2)} the polynomial entropy of Serre functor $h^\mathrm{pol}_t(S)=0$.

\medskip

{\rm (3)} the Hochschild (co)homology entropy of Serre quasi-functor $h^{HH^\bullet}(\tilde{S})=h^{HH_\bullet}(\tilde{S})=0$ if we assume further that $A$ is elementary.

\medskip

{\rm (4)} the upper (lower) Serre dimension $\ol{\mathrm{Sdim}}A=\ul{\mathrm{Sdim}}A=\frac{q}{p}$.
\end{theorem}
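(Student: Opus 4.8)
The plan is to extract everything from the single structural identity $\nu^p\cong\phi^*[q]$, equivalently $S^p(A)\cong A[q]$ in $\D^b(A)$, which holds by \cite[Proposition 4.3]{HerIya11}, together with the observation that $\phi^*$ carries the generator $A$ to $A_\phi\cong A$ (an isomorphism of right $A$-modules, since $\phi$ is an automorphism makes $A_\phi$ free of rank one). From the latter, $(\phi^*)^n(A)\cong A$ for all $n$, so $\dz_t(A,(\phi^*)^n(A))=\dz_t(A,A)$ is constant in $n$ and hence $h_t(\phi^*)=0$. For (1) I would then combine the Power and Shift properties (Lemma~\ref{Lemma-Entropy-Power} and Lemma~\ref{Lemma-Entropy-Shift}): since entropy is an isomorphism invariant of the functor, $p\,h_t(S)=h_t(S^p)=h_t(\phi^*[q])=h_t(\phi^*)+qt=qt$, giving $h_t(S)=\frac qp t$. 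In particular $h(S)=h_0(S)=0$, which will be reused below.

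For (2) and (4) I would exploit the periodicity of the orbit $S^n(A)$. Writing $n=pk+r$ with $0\le r<p$, iterating the identity gives $S^{pk}(A)\cong A[qk]$ and hence $S^n(A)\cong S^r(A)[qk]$; thus $(A^*)^{\otimes^L_A n}\cong S^r(A)[qk]$ runs, up to the shift $[qk]$, through the finite list $S^0(A),\dots,S^{p-1}(A)$. For (4) this makes the computation immediate: $\inf (A^*)^{\otimes^L_A n}=\inf S^r(A)-qk$ and similarly for $\sup$, so both $\tfrac{-\inf(A^*)^{\otimes^L_A n}}{n}$ and $\tfrac{-\sup(A^*)^{\otimes^L_A n}}{n}$ tend to $\tfrac qp$ as $n\to\infty$ (the bounded quantities $\inf S^r(A),\sup S^r(A)$ washing out), whence $\ol{\mathrm{Sdim}}A=\ul{\mathrm{Sdim}}A=\frac qp$. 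For (2), using the basic complexity identity $\dz_t(A,E[m])=e^{mt}\dz_t(A,E)$ I get $\dz_t(A,S^n(A))=e^{qkt}\dz_t(A,S^r(A))$, so the numerator $\log\dz_t(A,S^n(A))-n\,h_t(S)=\log\dz_t(A,S^r(A))-\frac{rq}{p}t$ in the definition of $h^{\mathrm{pol}}_t(S)$ stays bounded; dividing by $\log n$ yields $h^{\mathrm{pol}}_t(S)=0$.

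For (3), Proposition~\ref{Proposition-HHCEntropy=HHEntropy} reduces the statement to computing a single invariant, say $h^{HH_\bullet}(\tilde S)$. The upper bound is then free: by Theorem~\ref{Theorem-HHEntropy-Entropy} and (1), $h^{HH_\bullet}(\tilde S)\le h(S)=0$. The content is the matching lower bound, i.e.\ nonvanishing of the relevant groups for infinitely many powers. Here I would promote the object-level identity to a bimodule isomorphism $(A^*)^{\otimes^L_A p}\cong A_\phi[q]$ in $\per(A^e)$, legitimate under the correspondence $\mathcal{RH}\mathrm{om}(\per_\mathrm{dg}(A),\per_\mathrm{dg}(A))\cong\per_\mathrm{dg}(A^e)$ recalled in 2.3 (isomorphic quasi-functors correspond to isomorphic bimodules). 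This gives $(A^*)^{\otimes^L_A pk}\cong A_{\phi^k}[qk]$ and
$$A\otimes^L_{A^e}(A^*)^{\otimes^L_A pk}\cong HH_\bullet(A,A_{\phi^k})[qk],$$
so it suffices to show $HH_\bullet(A,A_{\phi^k})\neq 0$ for infinitely many $k$.

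The main obstacle is exactly this nonvanishing, and this is where the elementary hypothesis enters. The automorphism $\phi$ of the elementary algebra $A$ induces a permutation of the primitive idempotents $e_1,\dots,e_r$, of some finite order $N$; for $N\mid k$ the induced action of $\phi^k$ on $K_0(A)\cong\Z^r$ is the identity. I would then invoke the Hirzebruch--Riemann--Roch type theorem \cite[Theorem 1]{Han20} to identify the super dimension $\mathrm{sdim}_k(A\otimes^L_{A^e}A_{\phi^k})$ with the trace of this $K_0$-action, which equals $r$ whenever $N\mid k$. Since $\mathrm{tdim}_k\ge|\mathrm{sdim}_k|$, this forces $HH_\bullet(A,A_{\phi^k})\neq 0$ (indeed of total dimension $\ge r$) along the progression $N\mid k$, giving $h^{HH_\bullet}(\tilde S)\ge 0$ and hence $=0$; by Proposition~\ref{Proposition-HHCEntropy=HHEntropy} the same value is shared by $h^{HH^\bullet}(\tilde S)$. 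The delicate point to get right is the precise form of the HRR trace formula for the twisted bimodule $A_{\phi^k}$ and the verification that its $K_0$-contribution is genuinely the fixed-point count of the vertex permutation; everything else is bookkeeping with the periodicity already established.
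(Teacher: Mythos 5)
Your proposal is correct, and for parts (1), (2) and (4) it is essentially the paper's own argument: the paper also reduces (1) to $h_t(\phi^*)=0$ (computed via $\dim_k A_{\phi^n}=\dim_k A$ rather than your right-module isomorphism $A_{\phi^n}\cong A$, which amounts to the same thing) followed by the Power and Shift lemmas, proves (2) by the same division $n=ap+b$ with a bounded numerator (phrased with $\epsilon_t$ via Lemma~\ref{Lemma-PolyEntropy-CohFun} instead of $\dz_t$ directly), and proves (4) by the same stalk-complex observation. The genuine divergence is in (3). The paper establishes the upper bound by a direct size estimate: it rewrites $\RHom_{A^e}(A,(A^*)^{\otimes^L_An})$ as $A_{\phi^a}\otimes^L_{A^e}(A^*)^{\otimes^L_Ab}[qa]$ and bounds its total dimension uniformly in $n$ using a minimal projective bimodule resolution $P$ of $A^*$, which is finite dimensional because $\gl A^e<\infty$ for an elementary algebra of finite global dimension. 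You instead get the upper bound for free from Theorem~\ref{Theorem-HHEntropy-Entropy} combined with $h(S)=0$ from (1), which is cleaner and bypasses the resolution entirely. On the lower bound the paper is silent: it passes from ``the total dimension is bounded'' to ``the limsup equals $0$'', which strictly requires knowing the relevant homology does not vanish for infinitely many $n$; you supply exactly this missing nonvanishing via the trace formula of \cite{Han20} applied to $A_{\phi^k}$ along the progression where $\phi^k$ induces the trivial permutation of primitive idempotents. Your flagged ``delicate point'' is real but harmless: by the same formula the paper invokes in the proof of Theorem~\ref{Theorem-HHEntropy-Yomdin}, the super dimension of $A\otimes^L_{A^e}A_{\phi^k}$ is $\tr\bigl((C_{A_{\phi^k}}C_A^{-1})^{2}\bigr)$ rather than a bare trace of the $K_0$-action, but since $C_{A_{\phi^k}}=C_A$ when the permutation is trivial this still equals $r>0$, so your conclusion stands. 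Net effect: your treatment of (3) is shorter on the upper bound and more complete on the lower bound than the paper's.
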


\begin{proof}
(1) Since $A$ is a twisted $\frac{q}{p}$-Calabi-Yau algebra, there is an algebra automorphism $\phi$ of $A$ such that $S^p\cong \phi^*[q]$ as autoequivalences of $\per(A)$.
In view of $A_{\phi^n}\in\mod A$, we have
$$\begin{array}{ll}
h_t(\phi^*) & =\lim\limits_{n\to\infty}\frac{1}{n}\log\sum\limits_{l\in\mathbb{Z}}\ \dim_k\Hom_{\D^b(A)}(A,(\phi^*)^n(A)[l])\cdot e^{-lt} \\ [4mm]
& = \lim\limits_{n\to\infty}\frac{1}{n}\log\sum\limits_{l\in\mathbb{Z}}\ \dim_k\Hom_{\D^b(A)}(A,A_{\phi^n}[l])\cdot e^{-lt} \\ [4mm]
& = \lim\limits_{n\to\infty}\frac{1}{n}\log \dim_kA_{\phi^n}
= \lim\limits_{n\to\infty}\frac{1}{n}\log \dim_kA = 0.
\end{array}$$
Furthermore, from Lemma~\ref{Lemma-Entropy-Power} and Lemma~\ref{Lemma-Entropy-Shift}, we obtain $p\cdot h_t(S) = h_t(S^p) = h_t(\phi^*[q]) =  h_t(\phi^*)+qt = qt$. Thus $h_t(S)=\frac{q}{p}t$.

\medskip

(2) Analogous to \cite[Remark 6.3]{FanFuOuc21}.
Since $A$ is twisted $\frac{q}{p}$-Calabi-Yau, we have $S^p(A) \cong A[q]$ in $\D^b(A)$  (\cite[Proposition 4.3]{HerIya11}).
For any $n\in\mathbb{Z}_{>0}$, we can write $n=ap+b$ with $a,b\in\mathbb{Z}$ and $0\le b\le p-1$.
Then we get
$$\epsilon_t(A,S^n(A)) = \epsilon_t(A,S^b(A)[qa]) = \epsilon_t(A,S^b(A))\cdot e^{qat}.$$
Furthermore, from (1), we obtain
$$\log\epsilon_t(A,S^n(A)) - n\cdot h_t(S) = \log\epsilon_t(A,S^b(A)) - \frac{qb}{p}t.$$
Since the absolute value of the right hand side is bounded by
$$\max\limits_{0\le b\le p-1}\{|\log\epsilon_t(A,S^b(A))|\}+|qt|,$$
which is independent of $n$, we have $h_t^\mathrm{pol}(S)=0$.

\medskip

(3) As in the proof of Proposition~\ref{Proposition-HHCEntropy=HHEntropy}, we have  $\RHom_{A^e}(A,(A^*)^{\otimes^L_An}) \cong A\otimes^L_{A^e}(A^*)^{\otimes^L_A{n-1}}$ in $\D^b(k)$ for all $n\in\mathbb{Z}_{>0}$.
For any $n\in\mathbb{Z}_{>0}$, we can write $n-1=ap+b$ with $a,b\in\mathbb{Z}$ and $0\le b\le p-1$.
Thus
$$\begin{array}{ll}
\RHom_{A^e}(A,(A^*)^{\otimes^L_An})
& \cong A\otimes^L_{A^e}(A^*)^{\otimes^L_A{n-1}} \\
& \cong A\otimes^L_{A^e}((A^*)^{\otimes^L_Ab}\otimes^L_A ((A^*)^{\otimes^L_Ap})^{\otimes^L_Aa}) \\
& \cong A\otimes^L_{A^e}((A^*)^{\otimes^L_Ab}\otimes^L_A (A_{\phi}[q])^{\otimes^L_Aa}) \\
& \cong A\otimes^L_{A^e}((A^*)^{\otimes^L_Ab}\otimes^L_AA_{\phi^a}[qa]) \\
& \cong A_{\phi^a}\otimes^L_{A^e}(A^*)^{\otimes^L_Ab}[qa],
\end{array}$$
where the third isomorphism holds since $A$ is twisted Calabi-Yau,
and the fourth isomorphism follows from $(A_{\phi})^{\otimes^L_Aa} \cong (A_{\phi})^{\otimes_Aa} \cong A_{\phi^a}$ in $\D^b(A^e)$.
Let $P$ be a minimal projective resolution of $A$-bimodule $A^*$.
Then $\mathrm{tdim}_k(A_{\phi^a}\otimes^L_{A^e}(A^*)^{\otimes^L_Ab}[qa]) = \mathrm{tdim}_k(A_{\phi^a}\otimes^L_{A^e}(A^*)^{\otimes^L_Ab}) = \mathrm{tdim}_k(A_{\phi^a}\otimes_{A^e}P^{\otimes_Ab})$.
Since $A$ is a finite dimensional elementary algebra, the factor algebra $A/\rad A$ of $A$ modulo its Jacobson radical $\rad A$ is a direct sum of finitely many copies of $k$, in particular, $A/\rad A$ is separable.
By the assumption that $A$ is twisted Calabi-Yau, we know $A$ is of finite global dimension.
Applying \cite[Corollary 18]{EilRosZel57}, we obtain $\gl A^e<\infty$, in particular, $\pd_{A^e}A<\infty$, and further, $A$ is (homologically) smooth.
Thus, as a graded vector space, $P$ is finite dimensional.
Since the $i$-th component of the complex $A_{\phi^a}\otimes_{A^e}P^{\otimes_Ab}$ is a quotient vector space of the $i$-th component of the complex $A_{\phi^a}\otimes_kP^{\otimes_Ab}$ for all $i\in\mathbb{Z}$, we have $\mathrm{dim}_k(A_{\phi^a}\otimes_{A^e}P^{\otimes_Ab})\le\mathrm{dim}_k(A_{\phi^a}\otimes_kP^{\otimes_Ab})$.  So $\mathrm{tdim}_k(A_{\phi^a}\otimes_{A^e}P^{\otimes_Ab})$ is bounded by $\mathrm{dim}_k(A_{\phi^a}\otimes_{A^e}P^{\otimes_Ab})$, then by $\mathrm{dim}_k(A_{\phi^a}\otimes_kP^{\otimes_kb}) = \mathrm{dim}_k(A\otimes_kP^{\otimes_kb})$, and further by $\max\limits_{0\le b\le p-1}\{\mathrm{dim}_k(A\otimes_kP^{\otimes_kb})\}$ which is finite and independent of $n$.
Hence $h^{HH^\bullet}(\tilde{S}) = \limsup\limits_{n\to\infty}\frac{1}{n}\log \mathrm{tdim}_k \RHom_{A^e}(A,(A^*)^{\otimes^L_An}) =0$.
By Proposition~\ref{Proposition-HHCEntropy=HHEntropy},
we have $h^{HH_\bullet}(\tilde{S}) = h^{HH^\bullet}(\tilde{S}) = 0$.

\medskip

(4) For any $n\in\mathbb{Z}_{>0}$, we have $S^{np}(A) \cong A[nq]$ in $\D^b(A)$. Since $A[nq]$ is a stalk complex concentrating on degree $-nq$, we get $\sup S^{np}(A) = \inf S^{np}(A) = -nq$.
Furthermore, we obtain $\ol{\mathrm{Sdim}}A = \ul{\mathrm{Sdim}}A = \lim\limits_{n\to \infty}\frac{nq}{np} = \frac{q}{p}$.
\end{proof}

\begin{remark}{\rm
(1) By Theorem~\ref{Theorem-TFCY-Entropies} (1) and (3), we have $h^{HH^\bullet}(\tilde{S}) = h^{HH_\bullet}(\tilde{S}) = h(S)=0$, which implies that the Kikuta and Ouchi's question (Question~\ref{Question-Kikuta-Ouchi}) has positive answer for the Serre quasi-functor on perfect dg module category of an elementary twisted fractionally Calabi-Yau algebra.

(2) For an elementary twisted $\frac{q}{p}$-Calabi-Yau algebra $A$, by \cite[Proposition 4.3]{HerIya11}, we have $S^p(A) \cong A[q]$ in $\D^b(A)$. So the functor $S^p[-q]$ sends an indecomposable projective right $A$-module to an indecomposable projective right $A$-module. Thus under the basis of the Grothendieck group $K_0(\D^b(A))$ of $\D^b(A)$ formed by the isomorphism classes of indecomposable projective right $A$-modules, the matrix of $(-1)^q[S]^p=[S^p[-q]]: K_0(\D^b(A)) \to K_0(\D^b(A)),\ [X]\mapsto [S^p(X)[-q]]$, is a permutation matrix, and thus an orthogonal matrix or a matrix of finite order. Hence the eigenvalues of $(-1)^q[S]^p, [S]^p$ and $[S]$, are roots of unity. Furthermore, the spectral radius of $[S]$ is equal to 1, that is, $\log\rho([S])=0$. So the Gromov-Yomdin type equality on entropy $h(S)=\log\rho([S])=0$ holds for the Serre functor on the perfect derived category, and the Gromov-Yomdin type equality on Hochschild (co)homology entropy $h^{HH_\bullet}(\tilde{S}) = h^{HH^\bullet}(\tilde{S}) = \log\rho([H^0(\tilde{S})]) = \log\rho([S]) =0$ holds for the Serre quasi-functor on the perfect dg module category, of an elementary twisted fractionally Calabi-Yau algebra.
}\end{remark}

\subsection{Higher representation-finite algebras}

Higher representation-finite algebras are generalizations of representation-finite hereditary algebras, which are typical examples of twisted fractionally Calabi-Yau algebras.

\medskip

\noindent{\bf Higher representation-finite algebras.} Let $d\in\mathbb{Z}_{>0}$ be a positive integer. A finite dimensional algebra $A$ is {\it $d$-representation-finite}
(\cite[Definition 2.2]{IyaOpp11}) if $\gl A\le d$ and there exists a {\it $d$-cluster tilting $A$-module} $M$, that is, an $A$-module $M\in\mod A$ satisfying
$$\begin{array}{ll}
\add M & = \{X\in\mod A\ |\ \Ext_A^i(M,X)=0, \forall\ 1\le i\le d-1\} \\
& =  \{X\in\mod A\ |\ \Ext_A^i(X,M)=0, \forall\ 1\le i\le d-1\}.
\end{array}$$
Here, $\add M$ denotes the full subcategory of $\mod A$ consisting of all direct summands of direct sums of finite copies of $M$.

1-representation-finite algebras are just representation-finite hereditary algebras by definition.
2-representation-finite algebras are exactly the truncated Jacobian algebras of selfinjective quivers with potentials (\cite[Theorem 3.11]{HerIya112}).
Moreover, one can construct many $d$-representation-finite algebras by tensor product (\cite[Corollary 1.5]{HerIya11}) and higher APR tilting (\cite[Theorem 4.2 and Theorem 4.7]{IyaOpp11}).

\begin{theorem} \label{Theorem-dRepFin-TFCY} {\rm (\cite[Theorem 1.1]{HerIya11})}
Let $A$ be an indecomposable $d$-representation-finite algebra.
Then $A$ is twisted $\frac{d(p-r)}{p}$-Calabi-Yau, where $r$ is the number of isomorphism classes of simple $A$-modules and $p$ is the number of indecomposable direct summands of the basic $d$-cluster tilting $A$-module.
\end{theorem}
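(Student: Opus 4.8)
The plan is to produce an isomorphism $\nu^N(A) \cong A[M]$ in $\D^b(A)$ for suitable $N \in \mathbb{N}$ and $M \in \mathbb{Z}$ with $M/N = d(p-r)/p$; by \cite[Proposition 4.3]{HerIya11} such an object-level isomorphism is equivalent to a functorial isomorphism $\nu^N \cong \phi^*[M]$ for some $\phi \in \Aut(A)$, which is exactly the statement that $A$ is twisted $\frac{d(p-r)}{p}$-Calabi-Yau. (Note that an object-level isomorphism forces only the twisted, not the untwisted, Calabi-Yau property, which is why the conclusion is the twisted one.) Throughout I write $S = \nu$ for the Serre $=$ Nakayama functor and set $\nu_d := \nu[-d]$, the derived $d$-th Nakayama functor, the higher analogue of the Auslander--Reiten translate.

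First I would recall the orbit description of the $d$-cluster tilting module from higher Auslander--Reiten theory \cite{IyaOpp11}. For an indecomposable $d$-representation-finite algebra the indecomposable summands of the basic $d$-cluster tilting module $M$ are exactly the objects $\nu_d^{-i}(P_j)$ that lie in $\mod A$, where $P_1,\dots,P_r$ are the indecomposable projectives: each $P_j$ starts a $\nu_d^{-1}$-orbit $P_j, \nu_d^{-1}(P_j),\dots$ of modules terminating at an injective $I_{\sigma(j)} = \nu(P_{\sigma(j)})$, after which $\nu_d^{-\ell_j}(P_j) \cong P_{\sigma(j)}[d]$, where $\ell_j$ is the length of this orbit and $\sigma$ is a permutation of $\{1,\dots,r\}$. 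Running once around a $\sigma$-cycle $C$ of length $m_C = |C|$ and translating $\nu_d = \nu[-d]$ back into $\nu$ yields the per-cycle relation $\nu^{L_C}(P_j) \cong P_j[d(L_C - m_C)]$ for every $j \in C$, where $L_C := \sum_{j \in C}\ell_j$. Summing over all cycles gives $\sum_C L_C = p$ and $\sum_C m_C = r$, since $p$ is the total number of summands of $M$ and $r$ the number of vertices.

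The heart of the argument, and the step I expect to be the main obstacle, is to show that the slope $L_C/m_C$ is independent of the cycle $C$, hence equal to $\frac{\sum_C L_C}{\sum_C m_C} = p/r$. Without this, each $\sigma$-cycle only yields a Calabi-Yau relation for the summand $\bigoplus_{j \in C} P_j$ with its own fraction $\frac{d(L_C-m_C)}{L_C}$, and these agree precisely when the slope is uniform; indeed a decomposable algebra with components of differing Calabi-Yau fractions shows uniformity genuinely requires indecomposability. I would deduce it from the connectedness of $A$: since $A$ is indecomposable, the $d$-Auslander algebra $\End_A(M)$ is connected, so the $d$-almost-split structure on $\add M$ gives a connected $d$-Auslander--Reiten quiver on which $\nu_d$ acts as an automorphism respecting the translation. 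On each orbit the pair $(L_C, m_C d)$ records the length and the total shift accumulated by $\nu_d$, i.e. a rotation number; the mechanism forcing two orbits linked by an irreducible morphism to share this number is Serre duality, which propagates a nonzero morphism $M_a \to M_b$ across the $\nu_d$-orbits of $M_a$ and $M_b$ and makes the two periodicities compatible, whence connectedness spreads one common slope to all summands. (Alternatively one could route this through the $(d+1)$-preprojective algebra $\Pi_{d+1}(A)$, whose self-injectivity and twisted periodicity encode the same uniformity; I would keep the Auslander--Reiten argument as the primary one.)

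Granting uniformity, I would finish as follows. Put $N := \mathrm{lcm}_C(L_C)$; raising the $C$-th relation to the power $N/L_C$ gives $\nu^N(P_j) \cong P_j[\,(N/L_C)\,d(L_C - m_C)\,]$, and the exponent equals $Nd(1 - m_C/L_C) = Nd(1 - r/p) = Nd(p-r)/p$ uniformly in $C$, being a genuine shift and hence automatically an integer $M$. Thus $\nu^N(A) \cong A[M]$ in $\D^b(A)$ with $M/N = d(p-r)/p$. Since the Calabi-Yau fraction is independent of the chosen relation, \cite[Proposition 4.3]{HerIya11} upgrades this to $\nu^N \cong \phi^*[M]$ for some $\phi \in \Aut(A)$, so $A$ is twisted $\frac{d(p-r)}{p}$-Calabi-Yau, as claimed.
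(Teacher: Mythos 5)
First, a point of comparison: the paper does not prove this statement at all. Theorem~\ref{Theorem-dRepFin-TFCY} is quoted from Herschend and Iyama (\cite[Theorem 1.1]{HerIya11}) and used as a black box, so there is no internal proof to measure yours against. Judged on its own terms, your reconstruction has the correct skeleton and does track the actual argument in the literature: the reduction via \cite[Proposition 4.3]{HerIya11} to an object-level isomorphism $\nu^N(A)\cong A[M]$, the orbit description $\nu_d^{-\ell_j}(P_j)\cong P_{\sigma(j)}[d]$ from higher Auslander--Reiten theory, the per-cycle relation $\nu^{L_C}(P_j)\cong P_j[d(L_C-m_C)]$ with $\sum_C L_C=p$ and $\sum_C m_C=r$, and the correct identification of the uniformity of the slope $L_C/m_C$ as the one nontrivial point where indecomposability enters. (Linearly oriented $A_3$ is a good sanity check: the orbit lengths are $1,2,3$, yet both $\sigma$-cycles have slope $2$, so uniformity is a real theorem and not a formality.)

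The gap is exactly at that point: you name the crux but do not prove it. ``Serre duality propagates a nonzero morphism across the $\nu_d$-orbits and makes the two periodicities compatible'' is a heuristic, not an argument --- it does not say what compatibility means, nor why it forces the equality $L_Cm_{C'}=L_{C'}m_C$ rather than, say, some congruence. One clean way to close it: every indecomposable of the $d$-cluster tilting subcategory $\mathcal{U}=\add\{\nu_d^i(A)\mid i\in\mathbb{Z}\}$ of $\D^b(A)$ lies in $(\mod A)[td]$ for a unique $t\in\mathbb{Z}$ (call $t$ its degree), and since $\gl A\le d$ one has $\Hom_{\D^b(A)}(X,Y)=0$ unless $\deg Y-\deg X\in\{0,1\}$. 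Now if $e_jAe_i\ne 0$, i.e.\ $\Hom_A(P_i,P_j)\ne 0$, with $i,j$ in cycles $C,C'$, apply the autoequivalence $\nu^{nL_CL_{C'}}$: the images of $P_i$ and $P_j$ still admit a nonzero morphism, but by your per-cycle relation their degrees differ by $n(L_Cm_{C'}-L_{C'}m_C)$, which is unbounded in $n$ unless $L_C/m_C=L_{C'}/m_{C'}$; connectedness of the quiver of $A$ then spreads one slope to all cycles. With that lemma in place, the rest of your computation (taking $N=\mathrm{lcm}_C L_C$ and $M=Nd(p-r)/p$, then invoking \cite[Proposition 4.3]{HerIya11}) is correct.
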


\medskip

\noindent{\bf Entropies and Serre dimensions for higher representation-finite algebras.} Applying Theorem~\ref{Theorem-dRepFin-TFCY} and Theorem~\ref{Theorem-TFCY-Entropies}, we can obtain immediately the following corollary.

\begin{corollary} \label{Corollary-RepFin-Entropies}
Let $A$ be an indecomposable $d$-representation-finite algebra,
$r$ the number of isomorphism classes of simple $A$-modules, and $p$ the number
of indecomposable direct summands of the basic $d$-cluster tilting $A$-module. Then

\medskip

{\rm (1)} the entropy of Serre functor $h_t(S)=\frac{d(p-r)}{p}t$.

\medskip

{\rm (2)} the polynomial entropy of Serre functor $h^\mathrm{pol}_t(S)=0$.

\medskip

{\rm (3)} the Hochschild (co)homology entropy of Serre quasi-functor $h^{HH^\bullet}(\tilde{S})=h^{HH_\bullet}(\tilde{S})=0$ if we assume further that $A$ is elementary.

\medskip

{\rm (4)} the upper (lower) Serre dimension $\ol{\mathrm{Sdim}}A=\ul{\mathrm{Sdim}}A=\frac{d(p-r)}{p}$.
\end{corollary}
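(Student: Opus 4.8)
The plan is to derive this statement as an immediate consequence of the two preceding results, Theorem~\ref{Theorem-dRepFin-TFCY} and Theorem~\ref{Theorem-TFCY-Entropies}, so the work is entirely bookkeeping rather than new analysis. First I would invoke Theorem~\ref{Theorem-dRepFin-TFCY} to record that an indecomposable $d$-representation-finite algebra $A$ is twisted $\frac{d(p-r)}{p}$-Calabi-Yau, where $r$ and $p$ are exactly as in the statement of the corollary, namely the number of isomorphism classes of simple $A$-modules and the number of indecomposable direct summands of the basic $d$-cluster tilting $A$-module. The key point to flag is that the integers appearing in the Calabi-Yau dimension are identified as $q=d(p-r)$ and denominator $p$; in particular, the $p$ playing the role of the denominator in Theorem~\ref{Theorem-TFCY-Entropies} is literally the same $p$ occurring in the corollary, so no reindexing is required.

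Next I would apply Theorem~\ref{Theorem-TFCY-Entropies} with $\frac{q}{p}=\frac{d(p-r)}{p}$. Substituting into its four conclusions yields at once: for (1), $h_t(S)=\frac{q}{p}t=\frac{d(p-r)}{p}t$; for (2), $h^{\mathrm{pol}}_t(S)=0$; for (3), $h^{HH^\bullet}(\tilde{S})=h^{HH_\bullet}(\tilde{S})=0$; and for (4), $\ol{\mathrm{Sdim}}A=\ul{\mathrm{Sdim}}A=\frac{q}{p}=\frac{d(p-r)}{p}$. Parts (2) and (3) carry over with no $q,p$ substitution at all, while (1) and (4) are simply the substitution above.

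The only points requiring a word of justification are the standing hypotheses needed to be in the setting of Theorem~\ref{Theorem-TFCY-Entropies}. Since $A$ is $d$-representation-finite we have $\gl A\le d<\infty$, so $A$ is a finite dimensional algebra of finite global dimension; hence $\per(A)=\D^b(A)$ carries the Nakayama (equivalently Serre) functor, and the Serre dimensions of Section~2.4 are defined, exactly as required. For part (3) one transfers the extra hypothesis verbatim: if $A$ is moreover elementary, then the twisted Calabi-Yau algebra to which Theorem~\ref{Theorem-TFCY-Entropies}(3) is applied is the same elementary algebra, so the hypothesis of that part is met. I do not expect any genuine obstacle here, as the entire content of the corollary already resides in the two cited theorems; the sole substantive step is the verification that the Calabi-Yau parameter $\frac{q}{p}$ furnished by Theorem~\ref{Theorem-dRepFin-TFCY} equals $\frac{d(p-r)}{p}$, which is precisely what that theorem asserts.
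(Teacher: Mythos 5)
Your proposal is correct and matches the paper exactly: the paper derives this corollary immediately by combining Theorem~\ref{Theorem-dRepFin-TFCY} (which identifies $A$ as twisted $\frac{d(p-r)}{p}$-Calabi-Yau) with Theorem~\ref{Theorem-TFCY-Entropies}, with no further argument. Your extra remarks on the standing hypotheses are harmless bookkeeping that the paper leaves implicit.
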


\subsection{Higher representation-infinite algebras}

All kinds of entropies and upper (lower) Serre dimension for an elementary higher representation-infinite algebra are completely determined by its global dimension and the linear invariants of its Coxeter matrix.

\medskip

\noindent{\bf Coxeter matrix.} Let $A$ be a finite dimensional elementary algebra, or equivalently, a bound quiver algebra $kQ/I$ where $Q$ is a finite quiver and $I$ is an admissible ideal of the path algebra $kQ$ (\cite{AssSimSko06,AusReiSma95}). Let $e_1,\cdots,e_r$ be a complete set of orthogonal primitive idempotents of $A$. The {\it Cartan matrix} of $A$ is the $r\times r$ integer-valued matrix $C_A:=(c_{ij})$ where $c_{ij}:= \dim_k\Hom_A(e_iA,e_jA) = \dim_ke_jAe_i$ for all $1\le i,j\le r$. Then the $i$-th row of $C_A$ is the dimension vector of the indecomposable projective left $A$-module $Ae_i$ and the $j$-th column of $C_A$ is the dimension vector of the indecomposable projective right $A$-module $e_jA$ for all $1\le i,j\le r$.

If $A$ is of finite global dimension then the Cartan determinant $\det C_A$ of $A$ is $\pm 1$ (\cite{Eil54}). Thus the Cartan matrix $C_A$ of $A$ is invertible and its inverse matrix $C_A^{-1}$ is also integer-valued.
Denote by $K_0(\mathcal{D}^b(A))$ the Grothendieck group of $\mathcal{D}^b(A)$. Then the Serre functor $S=-\otimes^L_AA^*$ induces the group automorphism
$$[S] : K_0(\mathcal{D}^b(A)) \to K_0(\mathcal{D}^b(A)),\ [X] \mapsto [S(X)].$$
Since $S(e_jA)=(Ae_j)^*$ for all $1\le j\le r$, the matrix of $[S]$ under the basis of $K_0(\mathcal{D}^b(A))$ consisting of the isomorphism classes of simple $A$-modules is $C^T_A C_A^{-1}$ by \cite[Theorem 1 (1)]{Han20}. The {\it Coxeter matrix} of $A$ is the $r\times r$ integer-valued matrix $\Phi=\Phi_A := -C^T_A\cdot C^{-1}_A$ (See \cite[Chapter III, Definition 3.14]{AssSimSko06}). Then we have the following commutative diagram:
$$\xymatrix{
K_0(\mathcal{D}^b(A)) \ar[r]^-{[S]} \ar[d]^-{\cong}& K_0(\mathcal{D}^b(A))\ar[d]^-{\cong} & [X] \ar@{|->}[rr] \ar@{|->}[d] && [S(X)] \ar@{|->}[d] \\
\mathbb{Z}^r \ar[r]^-{-\Phi} & \mathbb{Z}^r & \underline{\dim}X \ar@{|->}[r] & -\Phi\cdot\underline{\dim}X \ar@{=}[r] & \underline{\dim}S(X),
}$$
where $\underline{\dim}X$ is the dimension vector of the bounded complex $X$ of $A$-modules (\cite[Page 93]{Han20}).
More general, the {\it Cartan matrix} of a finite dimensional $A$-bimodule $M$ is the $r\times r$ integer-valued matrix $C_M:=(c_{ij})$ where $c_{ij}:= \dim_k\Hom_A(e_iA,e_jM) = \dim_ke_jMe_i$ for all $1\le i,j\le r$. For any $A$-bimodule complex $M$ of finite dimensional total cohomology, its {\it Cartan matrix} is the $r\times r$ integer-valued matrix $C_M := \sum\limits_{l\in\Z}(-1)^l\ C_{H^l(M)}$ (\cite[Remark 3]{Han20}), its {\it Coxeter matrix} is the $r\times r$ integer-valued matrix $\Phi_M:=-C_M^TC_A^{-1}$, and its {\it dual Coxeter matrix} is the $r\times r$ integer-valued matrix $\Psi_M:=-C_MC_A^{-1}$. By \cite[Lemma 5]{Han20}, we have $\Psi_M=-C_MC_A^{-1} =-C^T_{M^*}C_A^{-1} = \Phi_{M^*}$.

\medskip

\noindent{\bf Yomdin type inequality on Hochschild homology entropy.} We need the following Wimmer's formula to calculate Hochschild (co)homology entropy.

\begin{lemma} \label{Lemma-Trace-SpectralRadius} {\rm (\cite[Theorem]{Wim74})}
Let $M\in M_m(\mathbb{C})$ be an $m\times m$ complex matrix, and $\rho(M)$ the spectral radius of $M$. Then
$$\limsup\limits_{n\to\infty} \sqrt[n]{|\tr(M^n)|}=\rho(M).$$
\end{lemma}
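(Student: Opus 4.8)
The plan is to reduce $\tr(M^n)$ to the power sums of the eigenvalues and then to read off its exponential growth rate from the location of the nearest singularity of an associated generating function. First I would upper-triangularize $M$: by Schur's theorem there is an invertible $U$ with $UMU^{-1}=T$ upper triangular, carrying the eigenvalues $\lambda_1,\dots,\lambda_m$ of $M$ (listed with algebraic multiplicity) on its diagonal. Since $\tr$ is conjugation-invariant and $T^n$ is again upper triangular with diagonal $\lambda_1^n,\dots,\lambda_m^n$, we get $\tr(M^n)=\tr(T^n)=\sum_{i=1}^m\lambda_i^n=:p_n$ for every $n\ge 1$. If $M$ is nilpotent then $\rho(M)=0$, every $\lambda_i=0$, so $p_n=0$ and the claimed equality reads $0=0$; hence from now on I assume $\rho:=\rho(M)>0$. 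The upper bound is then immediate from $|p_n|\le\sum_i|\lambda_i|^n\le m\rho^n$, which gives $\sqrt[n]{|p_n|}\le\sqrt[n]{m}\,\rho\to\rho$ and so $\limsup_n\sqrt[n]{|p_n|}\le\rho$.

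The precise equality comes from a single generating-function computation. I would form the power series $f(z):=\sum_{n\ge1}p_n z^n$ and compute, for small $|z|$, $f(z)=\sum_{i=1}^m\sum_{n\ge1}(\lambda_i z)^n=\sum_{i=1}^m\frac{\lambda_i z}{1-\lambda_i z}$. Grouping equal eigenvalues, $f(z)=\sum_{\lambda\ne 0}m_\lambda\,\frac{\lambda z}{1-\lambda z}$, where the sum runs over the distinct nonzero eigenvalues and $m_\lambda$ is the algebraic multiplicity. This is a rational function whose only poles are the points $z=1/\lambda$, and the residue of the $\lambda$-summand there equals $-m_\lambda/\lambda\ne 0$; since distinct $\lambda$ give distinct poles, there is no cancellation, so each $1/\lambda$ is a genuine pole of $f$. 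The pole closest to the origin therefore sits exactly at distance $1/\rho$. Because the Taylor series of a rational function at $0$ has radius of convergence equal to the distance from $0$ to its nearest pole, the radius of convergence $R$ of $f$ equals $1/\rho$. By the Cauchy--Hadamard formula $1/R=\limsup_n\sqrt[n]{|p_n|}$, and hence $\limsup_n\sqrt[n]{|\tr(M^n)|}=\rho$, as claimed.

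The delicate point --- and the reason the naive estimate yields only the upper bound --- is the possible near-cancellation among the dominant eigenvalues: if several $\lambda_i$ share the maximal modulus $\rho$ but have incommensurable arguments, the individual power sums $p_n=\sum_i\lambda_i^n$ can be very small over long stretches of $n$, so one cannot argue pointwise that $|p_n|\gtrsim\rho^n$. A purely hands-on approach would have to invoke simultaneous Diophantine approximation (Dirichlet/Weyl recurrence) to produce a subsequence of $n$ along which all the arguments $n\arg(\lambda_i)$ return close to $0$ and the dominant contributions add up constructively. The generating-function argument sidesteps this entirely: it packages the whole sequence into one analytic object, and the positivity of the multiplicities $m_\lambda$ guarantees that the dominant pole at $1/\rho$ cannot be annihilated by cancellation --- which is precisely the content that the Diophantine argument would otherwise have to establish by hand.
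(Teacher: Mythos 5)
Your argument is correct: reducing $\tr(M^n)$ to the power sums $p_n=\sum_i\lambda_i^n$, getting the upper bound from $|p_n|\le m\rho^n$, and extracting the lower bound from the fact that $\sum_{n\ge1}p_nz^n=\sum_{\lambda\ne0}m_\lambda\frac{\lambda z}{1-\lambda z}$ has a genuine (uncancelled, since the residues $-m_\lambda/\lambda$ are nonzero and the poles are distinct) pole at distance exactly $1/\rho$ from the origin, combined with Cauchy--Hadamard, is a complete proof. The paper does not prove this lemma but simply cites Wimmer, whose note ``Spectral radius and radius of convergence'' establishes it by exactly this resolvent/generating-function mechanism, so your proof is essentially the one behind the citation.
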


The following result gives the Yomdin type inequality on Hochschild homology entropy. I do not know whether the Gromov type inequality on Hochschild homology entropy, that is, $h^{HH_\bullet}(M) \le \log\rho(\Psi_M)$, and the Gromov and Yomdin type inequalities on Hochschild cohomology entropy, i.e., $h^{HH^\bullet}(M) \le \log\rho(\Psi_M)$ and $h^{HH^\bullet}(M) \ge \log\rho(\Psi_M)$, hold or not.

\begin{theorem} \label{Theorem-HHEntropy-Yomdin} %{\rm (Yomdin type inequality)}
Let $A$ be a finite dimensional elementary algebra of finite global dimension, $M$ a bounded $A$-bimodule complex, and $\Psi_M:=-C_MC_A^{-1}$ the dual Coxeter matrix of $M$. Then
$h^{HH_\bullet}(M) \ge \log\rho(\Psi_M).$
\end{theorem}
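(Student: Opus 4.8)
The plan is to dominate the total dimension appearing in the definition of $h^{HH_\bullet}(M)$ by the absolute value of the associated super dimension (Euler characteristic), to evaluate that super dimension as the trace of a power of $\Psi_M$ via the Hirzebruch--Riemann--Roch type theorem \cite[Theorem 1]{Han20}, and finally to read off $\rho(\Psi_M)$ from Wimmer's formula (Lemma~\ref{Lemma-Trace-SpectralRadius}). As a preliminary I would note that, $A$ being elementary of finite global dimension, $A$ is smooth and $\gl A^e<\infty$, so $\per(A^e)$ is monoidal under $-\otimes^L_A-$ with unit $A$; hence $M^{\otimes^L_A n}$ is again a perfect $A$-bimodule complex for every $n\in\mathbb{N}$, and all the Cartan and dual Coxeter matrices below are defined and integer-valued.

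Next I would compute $\mathrm{sdim}_k(A\otimes^L_{A^e}M^{\otimes^L_A n})$. The functor $-\otimes^L_A M:\D^b(A)\to\D^b(A)$ sends $e_iA$ to $e_iM$, so reading off dimension vectors shows that its matrix on $K_0(\D^b(A))$ in the basis of simples is $C_MC_A^{-1}=-\Psi_M$; this generalizes the identity $[S]=C_A^TC_A^{-1}$ recorded for $M=A^*$ (where $C_{A^*}=C_A^T$). Consequently $-\otimes^L_A M^{\otimes^L_A n}=(-\otimes^L_A M)^n$ acts on $K_0$ as $(C_MC_A^{-1})^n=(-\Psi_M)^n$. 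The Hirzebruch--Riemann--Roch type theorem then identifies the Euler characteristic of Hochschild homology with this trace, giving
$$\mathrm{sdim}_k(A\otimes^L_{A^e}M^{\otimes^L_A n})=\tr\!\big((-\Psi_M)^n\big)=(-1)^n\tr(\Psi_M^n),$$
so that $\big|\mathrm{sdim}_k(A\otimes^L_{A^e}M^{\otimes^L_A n})\big|=|\tr(\Psi_M^n)|$ for all $n$.

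To conclude I would use the elementary termwise bound, valid for every $X\in\D^b(k)$,
$$\mathrm{tdim}_k(X)=\sum_{i\in\mathbb{Z}}\dim_kH^i(X)\ \ge\ \Big|\sum_{i\in\mathbb{Z}}(-1)^i\dim_kH^i(X)\Big|=|\mathrm{sdim}_k(X)|,$$
applied to $X=A\otimes^L_{A^e}M^{\otimes^L_A n}$, whence $\mathrm{tdim}_k(A\otimes^L_{A^e}M^{\otimes^L_A n})\ge|\tr(\Psi_M^n)|$. Since $\limsup_n\frac1n\log(-)$ is monotone, applying it to both sides and invoking Wimmer's formula $\limsup_n\sqrt[n]{|\tr(\Psi_M^n)|}=\rho(\Psi_M)$ yields
$$h^{HH_\bullet}(M)=\limsup_{n\to\infty}\frac1n\log\mathrm{tdim}_k(A\otimes^L_{A^e}M^{\otimes^L_A n})\ \ge\ \limsup_{n\to\infty}\frac1n\log|\tr(\Psi_M^n)|=\log\rho(\Psi_M),$$
which is the claimed inequality; if $\Psi_M$ is nilpotent then $\rho(\Psi_M)=0$, the right-hand side is $-\infty$, and the bound is vacuous.

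I expect the only delicate point to be the precise form of the HRR identity, namely that $\mathrm{sdim}_k(A\otimes^L_{A^e}N)$ equals $\tr(C_NC_A^{-1})$ rather than a transposed or oppositely signed variant, together with the multiplicativity $[-\otimes^L_A M^{\otimes^L_A n}]=[-\otimes^L_A M]^n$ on $K_0$. Both are supplied by \cite[Theorem 1]{Han20} and the functoriality of the induced $K_0$-action, and in any case a transpose or an overall sign is harmless here since it leaves $|\tr(\Psi_M^n)|$ unchanged. The genuinely new ingredient is thus merely the domination $\mathrm{tdim}_k\ge|\mathrm{sdim}_k|$, which is exactly the Yomdin-type mechanism converting the homological trace into a lower bound for the entropy.
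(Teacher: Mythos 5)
Your proposal is correct and follows essentially the same route as the paper's proof: bound $\mathrm{tdim}_k$ below by $|\mathrm{sdim}_k|$, evaluate the super dimension of $A\otimes^L_{A^e}M^{\otimes^L_An}$ as a trace of a power of $C_MC_A^{-1}$ via \cite[Theorem 1]{Han20}, and apply Wimmer's formula. The only (immaterial) difference is that the paper's displayed trace is $\tr\bigl((C_MC_A^{-1})^{n+1}\bigr)$ rather than your $\tr\bigl((C_MC_A^{-1})^{n}\bigr)$, an index shift that does not affect the $\limsup$ of $n$-th roots.
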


\begin{proof}
Applying \cite[Theorem 1 (4) and (2)]{Han20} and Lemma~\ref{Lemma-Trace-SpectralRadius}, we obtain
$$\begin{array}{ll}
& \limsup\limits_{n\to\infty}\ (\mathrm{tdim}_k(A\otimes^L_{A^e}M^{\otimes^L_An}))^{\frac{1}{n}} \\ [4mm]
\ge & \limsup\limits_{n\to\infty}\ |\mathrm{sdim}_k(A\otimes^L_{A^e}M^{\otimes^L_An})|^{\frac{1}{n}} \\ [4mm]
\stackrel{(4)}{=} & \limsup\limits_{n\to\infty}\ |\tr(C^{-1}_A\cdot C_{M^{\otimes^L_An}})|^{\frac{1}{n}} \\ [4mm]
\stackrel{(2)}{=} & \limsup\limits_{n\to\infty}\ |\tr(C^{-1}_A\cdot C_M(C^{-1}_A C_M)^{n-1})|^{\frac{1}{n}} \\ [4mm]
= & \limsup\limits_{n \to\infty}\ |\tr((C^{-1}_A C_M)^n)|^{\frac{1}{n}} \\ [4mm]
\stackrel{L8}{=} & \rho(C^{-1}_AC_M )= \rho(C_M C^{-1}_A)=  \rho(\Psi_M).
\end{array}$$
Thus
$h^{HH_\bullet}(M) = \limsup\limits_{n\to\infty}\frac{1}{n}\log
\mathrm{tdim}_k(A \otimes^L_{A^e}M^{\otimes^L_An}) \ge \log\rho(\Psi_M).$
\end{proof}
\medskip

\noindent{\bf Higher representation-infinite algebras.}
Let $A$ be a finite dimensional algebra of finite global dimension.
For any integer $d\in\mathbb{Z}$, the {\it $d$-Nakayama functor} $\nu_d:=\RHom_A(-,A)^*[-d]$, or naturally isomorphically, {\it $d$-Serre functor} $S_d:=-\otimes^L_AA^*[-d]$, is an autoequivalence on the bounded derived category $\mathcal{D}^b(A)$ of $A$ with the quasi-inverse $\nu^{-1}_d=S^{-1}_d=\RHom_A(A^*[-d],-)$.

Let $d$ be a positive integer.
A finite dimensional algebra $A$ is {\it $d$-representation-infinite} (\cite[Definition 2.7]{HerIyaOpp14}) if $\gl A\le d$ and $\nu_d^{-n}(P) \in \mod A$ for any indecomposable projective right $A$-module $P$ and $n\in\mathbb{Z}_{\ge 0}$.
In this case, $\gl A=d$ since $\Ext^d_A(A^*,A) \cong \nu_d^{-1}(A) \ne 0$.

1-representation-infinite algebras are just representation-infinite hereditary algebras by definition. Moreover, one can construct many $d$-representation-infinite algebras by tensor product (\cite[Theorem 2.10]{HerIyaOpp14}) and higher APR-tilting (\cite[Theorem 2.13]{HerIyaOpp14} and \cite[Theorem 3.1]{MizYam16}).

\medskip

\noindent{\bf Higher hereditary algebras.}
Let $d$ be a positive integer. A finite dimensional algebra $A$ is {\it $d$-hereditary} (\cite[Definition 3.2]{HerIyaOpp14}) if $\gl A\le d$ and $\nu_d^n(A) \in \D^{d\Z}(A)$ for all $n\in\Z$, where $\D^{d\Z}(A) := \{X\in\D^b(A)\ |\ H^i(X)=0, \forall i\in\Z\backslash d\Z\}$.
\medskip

The following result is the dichotomy theorem of higher hereditary algebras.

\begin{theorem} {\rm (\cite[Theorem 3.4]{HerIyaOpp14})}
Let $A$ be an indecomposable finite dimensional algebra. Then $A$ is $d$-hereditary if and only if it is either $d$-representation-finite or $d$-representation-infinite.
\end{theorem}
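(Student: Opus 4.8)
The plan is to route all three notions through the single autoequivalence $\nu_d=\nu[-d]$ of $\D^b(A)=\per(A)$; here $\gl A\le d$ (common to all three conditions) guarantees that $\D^b(A)$ is perfect and that $\nu=-\ot^L_AA^*$ is its Serre functor $S$. The basic estimate I would record first is a cohomological amplitude bound: since $\pd_A(A^*)\le d$, one has $\nu(M)\in\D^{[-d,0]}(A)$ and $\nu^{-1}(M)\in\D^{[0,d]}(A)$ for every $M\in\mod A$, so $\nu_d^{\mp 1}$ sends a module into a complex whose cohomology is confined to a window of length $d$. Granting this, the two easy implications are quick. If $A$ is $d$-representation-finite, its $d$-cluster tilting module yields the periodicity $\nu_d^{p}(A)\cong A[q]$ (twisted fractional Calabi--Yau), whence $\nu_d^{n}(A)\in\D^{d\Z}(A)$ for all $n$. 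If $A$ is $d$-representation-infinite, then $\nu_d^{-n}(A)\in\mod A\subseteq\D^{d\Z}(A)$ for $n\ge 0$ by hypothesis, and for positive powers I would apply the $k$-dual $(-)^*$, which exchanges $\nu_{d}$ on $A$ with $\nu_{d}^{-1}$ on $A^{\op}$ and swaps projectives with injectives, so that the injective orbit is controlled and the identity $\nu_d^{n}(A)=\nu_d^{n-1}(A^*)[-d]$ places everything in $\D^{d\Z}(A)$.

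The technical core, used in both remaining cases, is a clean self-orthogonality of the ``$d$-preprojective'' modules. For indecomposable projectives $P,Q=eA$ with $X=\nu_d^{-n}(P),\,Y=\nu_d^{-m}(Q)\in\mod A$ and $1\le i\le d-1$, I would apply the autoequivalence $\nu_d^{m}$ to reduce to $\Ext^i_A(X,Y)\cong\Hom_{\D^b(A)}(\nu_d^{m-n}(P),Q[i])$, then use Serre duality and the projectivity of $Q$ to identify it, up to $k$-dual, with the $e$-component of $H^{-i}(\nu(\nu_d^{m-n}(P)))$. Since $\nu=\nu_d[d]$, one has $\nu(\nu_d^{m-n}(P))=\nu_d^{m-n+1}(P)[d]$, so this equals $(H^{d-i}(\nu_d^{m-n+1}(P)))e$. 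By $d$-heredity the summand $\nu_d^{m-n+1}(P)$ of $\nu_d^{m-n+1}(A)$ has cohomology only in degrees of $d\Z$, and since $1\le d-i\le d-1$ is not a multiple of $d$ this group vanishes; hence $\Ext^i_A(X,Y)=0$ for $1\le i\le d-1$. This computation makes no finiteness assumption and delivers the $\Ext$-vanishing ``in the middle'' for the whole preprojective family at once.

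For the dichotomy itself, I would suppose $A$ is $d$-hereditary and inspect the negative orbit $\{\nu_d^{-n}(A)\}_{n\ge 0}$. Either every term lies in $\mod A$, which is exactly the definition of $d$-representation-infinite; or some term leaves $\mod A$. In the latter case the amplitude bound together with $\D^{d\Z}$-concentration forces the escaping cohomology into degree $-d$, and the computation $H^{-d}(\nu_d^{-1}(M))=\Hom_A(A^*,M)$ shows the orbit can only leave $\mod A$ after reaching an injective, where $\nu_d^{-1}$ of an injective is a shifted projective. Because $A$ is indecomposable, hence connected, this wrap-around propagates uniformly across all indecomposable projectives and forces the orbit to be periodic up to shift, $\nu_d^{p}(A)\cong A[q]$; equivalently $\nu^{p}(A)\cong A[q+pd]$, so $A$ is twisted fractionally Calabi--Yau and only finitely many orbit terms are modules.

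It then remains to show that the finite sum $M$ of these preprojective modules is $d$-cluster tilting, and this is the step I expect to be the main obstacle. The middle-range self-orthogonality $\Ext^i_A(M,M)=0$ $(1\le i\le d-1)$ is already supplied by the computation above, and its $\Ext^i_A(-,M)$ counterpart follows symmetrically from the injective (dual) orbit. The genuinely hard content is maximality: proving both equalities $\add M=\{X\mid\Ext^i_A(M,X)=0,\ 1\le i\le d-1\}=\{X\mid\Ext^i_A(X,M)=0,\ 1\le i\le d-1\}$, i.e.\ that nothing outside $\add M$ is $(d-1)$-orthogonal to $M$. This requires the functorial finiteness of $\add M$ and the full force of Iyama's higher Auslander--Reiten theory, rather than the Serre-duality bookkeeping that gives self-orthogonality for free; establishing it completes the identification of $A$ as $d$-representation-finite.
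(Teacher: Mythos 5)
The paper offers no proof of this statement: it is imported verbatim from Herschend--Iyama--Oppermann (\cite[Theorem 3.4]{HerIyaOpp14}) and used as a black box, so there is no in-paper argument to measure yours against. On its own terms, your sketch does follow the general strategy of the original proof --- the amplitude bound for $\nu_d^{\pm 1}$ on modules, the Serre-duality computation giving $\Ext^i$-vanishing for $1\le i\le d-1$ along the $\nu_d^{-1}$-orbit of the projectives, and the case split on whether $\{\nu_d^{-n}(A)\}_{n\ge 0}$ stays in $\mod A$ --- and the self-orthogonality computation you carry out is correct.

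There are, however, genuine gaps. Decisively, you leave unproven the claim that in the terminating case the module $M=\bigoplus_n\nu_d^{-n}(A)$ satisfies the two maximality equalities in the definition of a $d$-cluster tilting module; this is not bookkeeping but the actual content of the ``finite'' half of the dichotomy (in the literature it rests on \cite[Theorem 3.1]{IyaOpp11}, which characterises $d$-representation-finiteness by $A^*\in\add\{\nu_d^{-i}(A)\mid i\ge 0\}$), so as written you obtain a self-orthogonal module but not that $A$ is $d$-representation-finite. Second, the assertion that ``the wrap-around propagates uniformly across all indecomposable projectives and forces the orbit to be periodic up to shift'' is stated from indecomposability without an argument; one must actually show that if one projective's orbit leaves $\mod A$ then every projective's orbit does so after finitely many steps (this is where connectedness enters, via nonvanishing Hom-spaces between orbit terms), and the periodicity $\nu_d^{p}(A)\cong A[q]$ is a consequence of identifying $M$ as cluster tilting, not an independent input. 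A smaller issue: in the converse direction, deducing $\nu_d^{n}(A)\in\D^{d\Z}(A)$ for \emph{all} $n$ from $\nu^{p}(A)\cong A[q]$ still requires both that $q\in d\Z$ (true, since $q=d(p-r)$) and a check of the intermediate powers $0<n<p$, which periodicity alone does not supply.
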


\medskip

\noindent{\bf Entropies and Serre dimensions for higher representation-infinite algebras.}
Now we apply the Yomdin type inequality on Hochschild homology entropy in Theorem~\ref{Theorem-HHEntropy-Yomdin} to show the following theorem.

\begin{theorem} \label{Theorem-RepInfAlg-Entropies}
Let $A$ be an elementary $d$-representation-infinite algebra, and $\Phi$ the Coxeter matrix of $A$. Then

\medskip

{\rm (1)} the entropy of (inverse) Serre functor: $h_t(S)= dt+\log\rho(\Phi)$ and
$h_t(S^{-1}) \linebreak = -dt+\log\rho(\Phi^{-1}).$
Furthermore, $\rho(\Phi)=\rho(\Phi^{-1}).$

\medskip

{\rm (2)} the polynomial entropy of (inverse) Serre functor: $h^\mathrm{pol}_t(S)=s(\Phi)$ and $h^\mathrm{pol}_t(S^{-1})=s(\Phi^{-1})$. Furthermore, $s(\Phi)=s(\Phi^{-1})$.

\medskip

{\rm (3)} the Hochschild (co)homology entropy of (inverse) Serre quasi-functor: \linebreak $h^{HH^\bullet}(\tilde{S}) = h^{HH_\bullet}(\tilde{S}) = h(S) = \log\rho(\Phi) = \log\rho(\Phi^{-1}) = h(S^{-1}) = h^{HH_\bullet}(\tilde{S}^{-1}) \linebreak = h^{HH^\bullet}(\tilde{S}^{-1})$.

\medskip

{\rm (4)} the upper (lower) Serre dimension: $\ol{\mathrm{Sdim}}A = \ul{\mathrm{Sdim}}A = \gl A = d$.
\end{theorem}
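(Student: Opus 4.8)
The plan is to analyze the inverse Serre functor $S^{-1}$ first, where the $d$-representation-infinite hypothesis controls cohomological degrees most cleanly, and then transfer everything to $S$ via Lemma~\ref{Lemma-Entropy-Inverse} and Lemma~\ref{Lemma-PolyEntropy-Inverse}. Two preliminary observations organize the argument. For (4), recall that $A$ being $d$-representation-infinite means $\nu_d^{-n}(P)\in\mod A$ for every indecomposable projective $P$ and $n\ge 0$; dually, since $A^\op$ is again $d$-representation-infinite, $\nu_d^{\,n}(I)\in\mod A$ for every injective $I$ and $n\ge 0$. As $S=\nu_d[d]$ and $A^*=S(A)$, the first statement says $S^{-n}(A)=\nu_d^{-n}(A)[-nd]$ is a module placed in the single degree $nd$, while the second says $(A^*)^{\ot^L_An}=S^n(A)=\nu_d^{\,n-1}(A^*)[(n-1)d]$ is a module placed in the single degree $-(n-1)d$. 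Hence $\inf(A^*)^{\ot^L_An}=\sup(A^*)^{\ot^L_An}=-(n-1)d$, giving (4) at once: $\ol{\mathrm{Sdim}}A=\ul{\mathrm{Sdim}}A=\lim_n (n-1)d/n=d=\gl A$. The second observation is the similarity $\Phi^{-1}=C_A\,\Phi^T C_A^{-1}$, a one-line consequence of $\Phi=-C_A^TC_A^{-1}$; since every matrix is similar to its transpose, $\Phi^{-1}\sim\Phi^T\sim\Phi$, so $\Phi$ and $\Phi^{-1}$ share spectra and Jordan types. In particular $\rho(\Phi)=\rho(\Phi^{-1})$ and, by Lemma~\ref{Lemma-PolyEntropy-LinearOperator}, $s(\Phi)=s(\Phi^{-1})$.

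For (1), I would compute $h_t(S^{-1})$ from Theorem~\ref{Theorem-Entropy-Cohomology} with generator $G=A$. Because $S^{-n}(A)=\nu_d^{-n}(A)[-nd]$ sits in a single degree and $\Hom_A(A,-)$ is exact, $\sum_l\dim_k\Hom(A,S^{-n}(A)[l])\,e^{-lt}=\dim_k\nu_d^{-n}(A)\cdot e^{-ndt}$, so $h_t(S^{-1})=-dt+L$ with $L:=\lim_n\tfrac1n\log\dim_k\nu_d^{-n}(A)$. Reading dimension vectors in the simple basis, $\dim_k\nu_d^{-n}(A)=\|\Phi^{-n}\dv A\|_1$, whence Gelfand's formula gives $L\le\log\rho(\Phi^{-1})$. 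For the matching lower bound I would pass to the Hochschild side: the inverse Serre quasi-functor $\tilde S^{-1}$ is represented by $\Theta=\RHom_A(A^*,A)$, and since $[-\ot^L_A\Theta]=[S^{-1}]=-\Phi^{-1}=-\Psi_\Theta$ we get $\Psi_\Theta=\Phi^{-1}$; thus Theorem~\ref{Theorem-HHEntropy-Yomdin} yields $h^{HH_\bullet}(\tilde S^{-1})\ge\log\rho(\Phi^{-1})$, while Theorem~\ref{Theorem-HHEntropy-Entropy} yields $h^{HH_\bullet}(\tilde S^{-1})\le h(S^{-1})=L$. Combining, $L=\log\rho(\Phi^{-1})$ and $h^{HH_\bullet}(\tilde S^{-1})=\log\rho(\Phi^{-1})=h(S^{-1})$ simultaneously. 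So $h_t(S^{-1})=-dt+\log\rho(\Phi^{-1})$, and Lemma~\ref{Lemma-Entropy-Inverse} with $\rho(\Phi)=\rho(\Phi^{-1})$ gives $h_t(S)=h_{-t}(S^{-1})=dt+\log\rho(\Phi)$.

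Part (3) then closes as a chain of equalities. The $\tilde S$ case is symmetric: $\tilde S$ is represented by $A^*$ with $[-\ot^L_AA^*]=[S]=-\Phi=-\Psi_{A^*}$, so $\Psi_{A^*}=\Phi$ and $\log\rho(\Phi)\le h^{HH_\bullet}(\tilde S)\le h(S)=\log\rho(\Phi)$, forcing $h^{HH_\bullet}(\tilde S)=h(S)=\log\rho(\Phi)$. Proposition~\ref{Proposition-HHCEntropy=HHEntropy} upgrades both homology statements to cohomology, giving $h^{HH^\bullet}(\tilde S)=h^{HH_\bullet}(\tilde S)$ and $h^{HH^\bullet}(\tilde S^{-1})=h^{HH_\bullet}(\tilde S^{-1})$; assembling these with $\rho(\Phi)=\rho(\Phi^{-1})$ produces the full chain of (3).

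Finally, for (2) I would use Lemma~\ref{Lemma-PolyEntropy-CohFun} with $G=G'=A$. The same single-degree concentration makes $\epsilon_t(A,S^{-n}(A))=\dim_k\nu_d^{-n}(A)\,e^{-ndt}$ hold with no sign cancellation, so after subtracting $n\,h_t(S^{-1})$ the $t$-dependence cancels and $h^{\mathrm{pol}}_t(S^{-1})=\limsup_n\frac{\log\|\Phi^{-n}\dv A\|_1-n\log\rho(\Phi^{-1})}{\log n}$, the polynomial growth rate of the $\Phi^{-1}$-orbit of $\dv A$. The bound $\le s(\Phi^{-1})$ is immediate from $\|\Phi^{-n}\dv A\|\le\|\Phi^{-n}\|\,\|\dv A\|$ and Lemma~\ref{Lemma-PolyEntropy-LinearOperator}. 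The reverse inequality is the main obstacle: one must show $\dv A$ excites a Jordan block of $\Phi^{-1}$ of maximal size among eigenvalues of modulus $\rho(\Phi^{-1})$. Here I would exploit that $\dv\nu_d^{-n}(A)=\pm\Phi^{-n}\dv A$ are genuine nonzero dimension vectors in the nonnegative orthant; since their exponential rate is exactly $\rho(\Phi^{-1})$ (established above), the orbit is not confined to a subspace of smaller spectral radius, and a Krein--Rutman/Perron--Frobenius analysis of the salient cone generated by the orbit, invariant under $B=(-1)^{d+1}\Phi^{-1}$, shows the normalized orbit converges to a genuine eigenvector for the real eigenvalue $\rho(\Phi^{-1})$ and forbids a strictly larger peripheral complex Jordan block from dominating. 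The delicate point is to confirm that this real peripheral block is globally maximal and is reached by $\dv A$ (equivalently, that the cyclic subspace generated by $\dv A$ carries a maximal peripheral block of $\Phi^{-1}$); granting it, $h^{\mathrm{pol}}_t(S^{-1})=s(\Phi^{-1})$, and Lemma~\ref{Lemma-PolyEntropy-Inverse} with $s(\Phi)=s(\Phi^{-1})$ then gives $h^{\mathrm{pol}}_t(S)=s(\Phi)$.
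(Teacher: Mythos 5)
Your parts (1), (3) and (4) are essentially sound. For (4) you invoke the opposite-algebra duality where the paper cites \cite[Proposition 2.9 (d)]{HerIyaOpp14}, which is the same content. Your linear-algebra observation $\Phi^{-1}=C_A\Phi^TC_A^{-1}$, giving $\Phi^{-1}\sim\Phi$ and hence both $\rho(\Phi)=\rho(\Phi^{-1})$ and $s(\Phi)=s(\Phi^{-1})$ at once, is cleaner than the paper's derivation of these equalities from $h(\nu)=h(\nu^{-1})$ and the inverse lemma for polynomial entropy. For (1) you only get the upper bound $h(S^{-1})\le\log\rho(\Phi^{-1})$ from Gelfand applied to the orbit of $\dv A$, and you close the gap by sandwiching with Theorem~\ref{Theorem-HHEntropy-Yomdin} and Theorem~\ref{Theorem-HHEntropy-Entropy}; that is valid and non-circular, though more roundabout than the paper, which gets the exact value directly (see below).

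The genuine gap is in (2), and you flag it yourself: after reducing to $\limsup_n\bigl(\log\|\Phi^{-n}\dv A\|_1-n\log\rho(\Phi^{-1})\bigr)/\log n$, you must show the single vector $\dv A$ excites a maximal peripheral Jordan block of $\Phi^{-1}$, and your proposed Perron--Frobenius/Krein--Rutman argument is not carried out (nor is it routine: one must first produce a proper $\Phi^{-1}$-invariant cone from the orbit, then invoke Vandergraft-type results on the index of $\rho$, then still argue that the cyclic subspace of $\dv A$ meets that block). The paper sidesteps all of this with one device that you missed: instead of tracking the orbit of the single class $[A]$, track the full matrix $\bigl(\dim_k\Hom_A(P_i,\nu_d^{-n}(P_j))\bigr)_{i,j}$. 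Since $\nu_d^{-n}(P_j)\in\mod A$ and the $P_i$ are projective, $\dim_k\Hom_A(P_i,\nu_d^{-n}(P_j))=|\chi([P_i],[\nu_d^{-1}]^n([P_j]))|$, and because the Euler form $\chi$ is non-degenerate (Hirzebruch--Riemann--Roch plus $\det C_A=\pm1$) and the $[P_i]$ form a basis, the quantity $\|f\|:=\sum_{i,j}|\chi_{\mathbb R}([P_i],f([P_j]))|$ is a genuine norm on $\End_{\mathbb R}(K_0(\D^b(A))_{\mathbb R})$. Hence the sum appearing in Lemma~\ref{Lemma-PolyEntropy-CohFun} (with the basic generator $\bigoplus_iP_i$) is exactly $\|[\nu_d^{-1}]^n\|$, and Lemma~\ref{Lemma-PolyEntropy-LinearOperator} applies verbatim to give $h^{\mathrm{pol}}_t(\nu_d^{-1})=s([\nu_d^{-1}])=s(\Phi^{-1})$, with no statement about individual orbits needed. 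The same device immediately yields (1) via Gelfand's formula, $\lim_n\frac1n\log\|[\nu_d^{-1}]^n\|=\log\rho([\nu_d^{-1}])=\log\rho(\Phi^{-1})$, making your Hochschild detour unnecessary there. Until you either adopt this operator-norm formulation or supply the missing cone-theoretic argument in full, part (2) of your proof is incomplete.
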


\begin{proof}
(1) Let $P_1,\cdots,P_r$ be a complete set of indecomposable projective right $A$-modules.
Then the isomorphism classes $[P_1],\cdots,[P_r]$ form a $\mathbb{Z}$-basis of the Grothendieck group $K_0(\D^b(A))$.
The {\it Euler form} of $A$ is the $\mathbb{Z}$-bilinear form
$$\chi: K_0(\D^b(A)) \times K_0(\D^b(A)) \to \mathbb{Z},\ ([X],[Y]) \mapsto \sum\limits_{l\in\mathbb{Z}}(-1)^l\ \! \dim_k\Ext^l_A(X,Y).$$
The {\it Ringel form} of $A$ is the $\mathbb{Z}$-bilinear form
$$\langle -,-\rangle_A : \mathbb{Z}^r \times \mathbb{Z}^r \to \mathbb{Z},\ (x,y) \mapsto x^TC^{-T}_Ay.$$
By the Hirzebruch-Riemann-Roch type formula \cite[Theorem 1 (1)]{Han20}, we have
$$\chi([X],[Y])=\langle \underline{\dim} X,\underline{\dim} Y\rangle_A.$$
Thus $\chi$ is non-degenerate. Let the $\mathbb{R}$-vector space
$$K_0(\D^b(A))_{\mathbb{R}} := K_0(\D^b(A)) \otimes_{\mathbb{Z}} \mathbb{R}$$
be the scalar extension of $K_0(\D^b(A))$ and the $\mathbb{R}$-bilinear form
$$\chi_{\mathbb{R}}: K_0(\D^b(A))_{\mathbb{R}} \times K_0(\D^b(A))_{\mathbb{R}} \to \mathbb{R}$$
the scalar extension of $\chi$. Then $\chi_{\mathbb{R}}$ is also non-degenerate.

For any linear operator $f: K_0(\D^b(A))_\mathbb{R} \to K_0(\D^b(A))_\mathbb{R}$,
we define
$$\|f\| := \sum\limits_{i,j=1}^r|\chi_\mathbb{R}([P_i],f([P_j]))|.$$
Since $\chi_\mathbb{R}$ is non-degenerate, $\|-\|$ is a norm on the space $\End_{\mathbb{R}}(K_0(\D^b(A))_\mathbb{R})$ of linear operators on $K_0(\D^b(A))_\mathbb{R}$.

Since $A$ is $d$-representation-infinite, $\nu_d^{-n}(P_i)$ is isomorphic to an indecomposable $A$-module for all $n\in\mathbb{Z}_{\ge 0}$ and $1\le i\le r$. Thus
$$\begin{array}{lll}
h_t(\nu_d^{-1})
& = \lim\limits_{n\to\infty} \frac{1}{n} \log \sum\limits_{l\in\mathbb{Z}}\dim_k\Ext^l_A(A,\nu_d^{-n}(A))\cdot e^{-lt} & \mbox{(Theorem~\ref{Theorem-Entropy-Cohomology})} \\ [4mm]
& = \lim\limits_{n\to\infty} \frac{1}{n} \log \sum\limits_{i,j=1}^r\dim_k\Hom_A(P_i,\nu_d^{-n}(P_j)) & (\nu_d^{-n}(P_j)\in\mod A) \\ [4mm]
& = \lim\limits_{n\to\infty} \frac{1}{n} \log \sum\limits_{i,j=1}^r |\chi_\mathbb{R}([P_i],[\nu_d^{-1}]^n([P_j]))| & (\nu_d^{-n}(P_j)\in\mod A) \\ [4mm]
& = \lim\limits_{n\to\infty}\frac{1}{n}\log\|[\nu_d^{-1}]^n\| \\ [4mm]
& = \log\rho([\nu_d^{-1}]) & \mbox{(Gelfand formula)} \\ [4mm]
& = \log\rho([\nu^{-1}])= \log\rho([S^{-1}]) = \log\rho(\Phi^{-1}). & ([\nu_d^{-1}]=(-1)^d[\nu^{-1}]) \\ [4mm]
\end{array}$$
Here, the Gelfand formula is $\rho(A)=\lim\limits_{n\to\infty} \|A^n\|^{\frac{1}{n}}$ for all complex square matrix $A$ (See \cite[1.3.3]{BelLyu88}). By Lemma~\ref{Lemma-Entropy-Shift}, we have $h_t(\nu^{-1})+dt = h_t(\nu^{-1}_d) = \log\rho(\Phi^{-1})$.
Thus $h_t(\nu^{-1}) = -dt+\log\rho(\Phi^{-1})$.

\medskip

Similarly, since $A$ is $d$-representation-infinite, by \cite[Proposition 2.9]{HerIyaOpp14},
we have $\nu_d(A)=A^*[-d]$ and $\nu_d^n(A^*)\in\mod A$ for all $n\in\mathbb{Z}_{\ge 0}$.
Let $I_1,\cdots, I_r$ be a complete set of indecomposable injective right $A$-modules.
Then $\nu_d^n(I_j)\in\mod A$ for all $n\in\mathbb{Z}_{\ge 0}$ and $1\le j\le r$. Thus
$$\begin{array}{lll}
 & h_t(\nu_d) \\
= & \lim\limits_{n\to\infty} \frac{1}{n} \log \sum\limits_{l\in\mathbb{Z}}\dim_k\Ext^l_A(A,\nu_d^n(A))\cdot e^{-lt} & \mbox{(Theorem~\ref{Theorem-Entropy-Cohomology})} \\ [4mm]
= & \lim\limits_{n\to\infty} \frac{1}{n} \log \sum\limits_{l\in\mathbb{Z}}\dim_k\Ext^{l-d}_A(A,\nu_d^{n-1}(A^*))\cdot e^{-lt} & (\nu_d(A)=A^*[-d])\\ [4mm]
= & \lim\limits_{n\to\infty} \frac{1}{n} \log \sum\limits_{q\in\mathbb{Z}}\dim_k\Ext^q_A(A,\nu_d^{n-1}(A^*))\cdot e^{-qt} & (e^{-dt}\mbox{ is independent of } n)\\ [4mm]
= & \lim\limits_{n\to\infty} \frac{1}{n} \log \sum\limits_{i,j=1}^r\dim_k\Hom_A(P_i,\nu_d^{n-1}(I_j)) & [\nu_d^{n-1}(I_j)\in\mod A] \\ [4mm]
= & \lim\limits_{n\to\infty} \frac{1}{n} \log \sum\limits_{i,j=1}^r |\chi_\mathbb{R}([P_i],[\nu_d^{n-1}(I_j)])| & [\nu_d^{n-1}(I_j)\in\mod A] \\ [4mm]
= & \lim\limits_{n\to\infty} \frac{1}{n} \log \sum\limits_{i,j=1}^r |\chi_\mathbb{R}([P_i],[\nu_d]^n([P_j]))| & ([\nu_d]([P_j])=(-1)^d[I_j]) \\ [4mm]
= & \lim\limits_{n\to\infty}\frac{1}{n}\log\|[\nu_d]^n\| & \\ [4mm]
= & \log\rho([\nu_d]) & \mbox{(Gelfand formula)} \\ [4mm]
= & \log\rho([\nu]) = \log\rho([S]) = \log\rho(\Phi). & ([\nu_d]=(-1)^d[\nu])
\end{array}$$
By Lemma~\ref{Lemma-Entropy-Shift}, we have $h_t(\nu)-dt = h_t(\nu_d) = \log\rho(\Phi)$.
So $h_t(\nu) = dt+\log\rho(\Phi)$.

From Lemma~\ref{Lemma-Entropy-Inverse}, we obtain $h(\nu) = h(\nu^{-1})$. Hence $\rho(\Phi) = \rho(\Phi^{-1})$.

\medskip

(2) Since $A$ is $d$-representation-infinite, we have $\nu_d^{-n}(A)\in\mod A$ for all $n\in\mathbb{Z}_{\ge 0}$.
Applying Lemma~\ref{Lemma-PolyEntropy-CohFun}, (1) and Lemma~\ref{Lemma-PolyEntropy-LinearOperator}, we obtain
$$\begin{aligned}
  & h^\mathrm{pol}_t(\nu^{-1}_d) \\
\stackrel{L4}{=} & \limsup\limits_{n\to\infty}
\frac{\log \sum\limits_{l\in\mathbb{Z}} \dim_k\Hom_{\D^b(A)}(A,\nu_d^{-n}(A)[l])\cdot e^{-lt}-n\cdot h_t(\nu_d^{-1})}{\log n} \\
= & \limsup\limits_{n\to\infty}
\frac{\log \sum\limits_{i,j=1}^r \dim_k \Hom_A(P_i,\nu_d^{-n}(P_j))-n\cdot h_t(\nu_d^{-1})}{\log n} \\
= & \limsup\limits_{n\to\infty}
\frac{\log \sum\limits_{i,j=1}^r |\chi([P_i],[\nu_d^{-n}(P_j)])|-n\cdot h_t(\nu_d^{-1})}{\log n} \\
\stackrel{(1)}{=} & \limsup\limits_{n\to\infty}
\frac{\log \|[\nu_d^{-1}]^n\|-n\cdot\log\rho([\nu_d^{-1}])}{\log n} \\
\stackrel{L7}{=} & s([\nu_d^{-1}]) = s([\nu^{-1}])= s([S^{-1}]) = s(\Phi^{-1}).
\end{aligned}$$

Similarly,
applying Lemma~\ref{Lemma-PolyEntropy-CohFun}, (1) and Lemma~\ref{Lemma-PolyEntropy-LinearOperator}, we obtain
$$\begin{aligned}
  & h^\mathrm{pol}_t(\nu_d) \\
\stackrel{L4}{=} & \limsup\limits_{n\to\infty}
\frac{\log \sum\limits_{l\in\mathbb{Z}} \dim_k\Hom_{\D^b(A)}(A,\nu_d^n(A)[l])\cdot e^{-lt}-n\cdot h_t(\nu_d)}{\log n} \\
= & \limsup\limits_{n\to\infty}
\frac{\log \sum\limits_{l\in\mathbb{Z}} \dim_k\Hom_{\D^b(A)}(A,\nu_d^{n-1}(A^*)[l-d])\cdot e^{-lt}-n\cdot h_t(\nu_d)}{\log n} \\
= & \limsup\limits_{n\to\infty}
\frac{\log \sum\limits_{i,j=1}^r \dim_k \Hom_{\D^b(A)}(P_i,\nu_d^{n-1}(I_j))-n\cdot h_t(\nu_d)}{\log n} \\
= & \limsup\limits_{n\to\infty}
\frac{\log \sum\limits_{i,j=1}^r |\chi([P_i],[\nu_d]^n([P_j]))|-n\cdot h_t(\nu_d)}{\log n} \\
\stackrel{(1)}{=} & \limsup\limits_{n\to\infty}
\frac{\log \|[\nu_d]^n\|-n\cdot\log\rho([\nu_d])}{\log n} \\
\stackrel{L7}{=} & s([\nu_d]) = s([\nu]) = s([S]) = s(\Phi).
\end{aligned}$$

By Lemma~\ref{Lemma-PolyEntropy-Shift} and Lemma~\ref{Lemma-PolyEntropy-Inverse}, we have $h^\mathrm{pol}_t(\nu) = h^\mathrm{pol}_t(\nu_d) = h^\mathrm{pol}_{-t}(\nu_d^{-1})= h^\mathrm{pol}_{-t}(\nu^{-1})$. Furthermore, $s(\Phi) = s(\Phi^{-1})$.

\medskip

(3) By \cite[Lemma 5]{Han20}, we have $C_{A^*}=C^T_A$.
Thanks to the Yomdin type inequality on Hochschild homology entropy in Theorem~\ref{Theorem-HHEntropy-Yomdin}, we obtain
$h^{HH_\bullet}(\tilde{S}) = h^{HH_\bullet}(A^*) \ge \log\rho(\Psi_{A^*}) = \log\rho(C_{A^*}C_A^{-1}) = \log\rho(C_A^TC_A^{-1}) = \log\rho(\Phi)$.
From (1) above, we know $h(S)=\log\rho(\Phi)$. Due to Theorem~\ref{Theorem-HHEntropy-Entropy}, we have $h^{HH_\bullet}(\tilde{S})\le h(S)$.
By Proposition~\ref{Proposition-HHCEntropy=HHEntropy}, we get $h^{HH^\bullet}(\tilde{S})=h^{HH_\bullet}(\tilde{S})$.
Thus $h^{HH^\bullet}(\tilde{S})=h^{HH_\bullet}(\tilde{S})=h(S)=\log\rho(\Phi)$.

\medskip

Similarly, by \cite[Theorem 1 (1) and Lemma 5]{Han20}, we have $C_{\RHom_A(A^*,A)} = (C^T_A)^TC^{-T}_AC_A = C_AC^{-T}_AC_A$. By the Yomdin type inequality on Hochschild homology entropy in Theorem~\ref{Theorem-HHEntropy-Yomdin}, we obtain $h^{HH_\bullet}(\tilde{S}^{-1}) = h^{HH_\bullet}(\RHom_A(A^*,A)) \ge \log\rho(\Psi_{\RHom_A(A^*,A)}) = \log\rho(C_{\RHom_A(A^*,A)}C_A^{-1}) = \log\rho(C_AC_A^{-T}) = \log\rho(\Phi^{-1})$. From (1) above, we know $h(S^{-1})=\log\rho(\Phi^{-1}) =\log\rho(\Phi)$.
It follows from Theorem~\ref{Theorem-HHEntropy-Entropy} that $h^{HH^\bullet}(\tilde{S}^{-1})\le h(S^{-1})$.
By Proposition~\ref{Proposition-HHCEntropy=HHEntropy}, we get $h^{HH^\bullet}(\tilde{S}^{-1})=h^{HH_\bullet}(\tilde{S}^{-1})$.
Thus $h^{HH^\bullet}(\tilde{S}^{-1})=h^{HH_\bullet}(\tilde{S}^{-1})=h(S^{-1})=\log\rho(\Phi^{-1})=\log\rho(\Phi)$.

\medskip

(4) For any $n\in\mathbb{Z}_{>0}$, from \cite[Proposition 2.9 (d)]{HerIyaOpp14}, we obtain
$$S^n(A) = \nu_d^{n-1}(A^*)[d(n-1)] \in (\mod A)[d(n-1)]$$
which is a stalk complex concentrating on degree $-d(n-1)$.
Thus we have
$$\ol{\mathrm{Sdim}}A = \ul{\mathrm{Sdim}}A = \lim\limits_{n\to \infty} \frac{-\inf S^n(A)}{n} = \lim\limits_{n\to \infty} \frac{-\sup S^n(A)}{n} = \lim\limits_{n\to \infty} \frac{d(n-1)}{n} = d.$$

Now we have finished the proof of the theorem. \end{proof}

\medskip

\begin{remark}{\rm
(1) Theorem~\ref{Theorem-RepInfAlg-Entropies} (1) generalizes \cite[Theorem 2.17]{DimHaiKatKon14}, and implies that the Gromov-Yomdin type equality on entropy, that is, $h(S)= \log\rho([S])$, holds for the Serre functor on perfect derived category of an elementary higher representation-infinite algebra.

(2) Theorem~\ref{Theorem-RepInfAlg-Entropies} (2) (cf. \cite[Proposition 4.4]{FanFuOuc21}) implies that the polynomial entropy of Serre functor on perfect derived category of an elementary higher representation-infinite algebra has nothing to do with the parameter $t$, that is, $h^\mathrm{pol}_t(S)$ is a constant function.

(3) Theorem~\ref{Theorem-RepInfAlg-Entropies} (3) implies that the Kikuta-Ouchi's question (Question~\ref{Question-Kikuta-Ouchi}) has positive answer, that is, $h^{HH^\bullet}(\tilde{S}^{\pm 1})=h^{HH_\bullet}(\tilde{S}^{\pm 1})=h(S^{\pm 1})$ and the Gromov-Yomdin type equality on Hochschild (co)homology entropy $h^{HH^\bullet}(\tilde{S}^{\pm 1})= \linebreak \log\rho([H^0(\tilde{S}^{\pm 1})])=h^{HH_\bullet}(\tilde{S}^{\pm 1})$ holds, for the (inverse) Serre quasi-functor on perfect dg module category of an elementary higher representation-infinite algebra.
}\end{remark}

\bigskip

\noindent {\footnotesize {\bf ACKNOWLEDGEMENT.} The author is very grateful to the anonymous referee for his/her careful reading the original manuscript and many conducive suggestions which make the paper much readable.}

\footnotesize


\begin{thebibliography}{99}

\bibitem{AssSimSko06} I. Assem, D. Simson and A. Skowro\'{n}ski, Elements of the representation theory of associative algebras 1: Techniques of representation theory, London Math. Soc. Stud. Texts, vol. 65, Cambridge University Press, Cambridge, 2006.

\bibitem{AusReiSma95} M. Auslander, I. Reiten, and S. Smal\o, Representation theory of
artin algebras, Cambridge Studies in Advanced Mathematics 36, Cambridge University Press, Cambridge, New York, 1995.

\bibitem{BelLyu88} G.R. Belitskii and Yu.I. Lyubich, Matrix norms and their applications, Translated from the Russian by A. Iacob, Operator Theory: Advances and Applications, 36, Birkh\"{a}user Verlag, Basel, 1988.

\bibitem{BonKap91} A. Bondal and M. Kapranov, Enhanced triangulated categories, Math. URSS Sbornik
70 (1991), 93--107.

\bibitem{CanSte17} A. Canonaco and P. Stellari, A tour about existence and uniqueness of dg enhancements and lifts, J. Geom. Phys. 122 (2017), 28--52.

\bibitem{ChaDarIyaMar21} A. Chan, E. Darp\"{o}, O. Iyama and R. Marczinzik, Periodic trivial extension algebras and fractionally Calabi-Yau algebras, arXiv:2012.11927v4 [math.RT], 2024.

\bibitem{DimHaiKatKon14} G. Dimitrov, F. Haiden, L. Katzarkov and M. Kontsevich, Dynamical systems and categories, in: The Influence of Solomon Lefschetz in Geometry and Topology, Contemp. Math., vol. 621, Amer. Math. Soc., Providence, RI, 2014, pp. 133--170.

\bibitem{Eil54} S. Eilenberg, Algebras of cohomologically finite dimension, Comment. Math. Helv. 28 (1954), 310--319.

\bibitem{EilRosZel57} S. Eilenberg, A. Rosenberg and D. Zelinsky, On the dimension of modules and algebras, VIII, Dimension of tensor products, Nagoya Math. J. 12 (1957), 71--93.

\bibitem{Ela22} A. Elagin, Calculating dimension of triangulated categories: Path algebras, their tensor powers and orbifold projective lines, J. Algebra 592 (2022), 357--401.

\bibitem{ElaLun21} A. Elagin and V.A. Lunts, Three notions of dimension for triangulated categories, J. Algebra 569 (2021), 334--376.

\bibitem{FanFuOuc21} Y.W. Fan, L. Fu and G. Ouchi, Categorical polynomial entropy, Adv. Math. 383 (2021), 107655.

\bibitem{Han20} Y. Han, Hirzebruch-Riemann-Roch and Lefschetz type formulas for finite dimensional algebras, J. Algebra 609 (2022), 87--119.

\bibitem{HerIya11} M. Herschend and O. Iyama, $n$-representation-finite algebras and twisted fractionally Calabi-Yau algebras, Bull. Lond. Math. Soc. 43 (2011), no. 3, 449--466.

\bibitem{HerIya112} M. Herschend and O. Iyama, Selfinjective quivers with potential and 2-representation-finite algebras, Compos. Math. 147 (2011), no. 6, 1885--1920.

\bibitem{HerIyaOpp14} M. Herschend, O. Iyama and S. Oppermann, $n$-representation infinite algebras, Adv. Math. 252 (2014), 292--342.

\bibitem{IyaOpp11} O. Iyama and S. Oppermann, $n$-representation-finite algebras and $n$-APR tilting, Trans. Amer. Math. Soc. 363 (2011), no. 12, 6575--6614.

\bibitem{Kel94} B. Keller, Deriving DG categories, Ann. Sci. \'{E}cole Norm. Sup. 27 (1994), no. 4,  63--102.

\bibitem{Kel06} B. Keller, On differential graded categories, International Congress of Mathematicians II, European Mathematical Society, Z\"{u}rich, 2006, 151--190.

\bibitem{Kel11} B. Keller, Deformed Calabi-Yau completions, With an appendix by M. Van den Bergh, J. Reine Angew. Math. 654 (2011), 125--180.

\bibitem{KikOuc20} K. Kikuta and G. Ouchi, Hochschild entropy and categorical entropy, Arnold Math. J. 9 (2023), 223--244.

\bibitem{KikShiTak20} K. Kikuta, Y. Shiraishi and A. Takahashi, A note on entropy of auto-equivalences: Lower bound and the case of orbifold projective line, Nagoya Math. J. 238 (2020), 86--103.

\bibitem{MizYam16} Y. Mizuno and K. Yamaura, Higher APR tilting preserves $n$-representation infiniteness, J. Algebra 447 (2016), 56--73.

\bibitem{Toe07} B. To\"{e}n, The homotopy theory of dg-categories and derived Morita theory, Invent. Math. 167 (2007), 615--667.

\bibitem{Toe11} B. To\"{e}n, Lectures on dg-categories, in: Topics in algebraic and topological K-Theory, Lecture Notes in Math. 2008, Springer-Verlag, Berlin Heidelberg, 2011, pp. 243--302.

\bibitem{ToeVaq07} B. To\"{e}n and Michel Vaqui\'{e}, Moduli of objects in dg-categories, Ann. Sci. \'{E}cole Norm. Sup. (4) 40 (2007), no. 3, 387--444.

\bibitem{Wim74} H.K. Wimmer, Spectral radius and radius of convergence, Amer. Math. Monthly 81 (1974), no. 6, 625--627.

\end{thebibliography}
\end{document}